\renewcommand{\geq}{\geqslant}
\renewcommand{\leq}{\leqslant}
\newtheorem{theorem}{Theorem}
\newtheorem{lemma}[theorem]{Lemma}
\newtheorem{corollary}[theorem]{Corollary}
\theoremstyle{definition}
\newtheorem{example}[theorem]{Example}
\newtheorem{maintheorem}{Theorem}
\newcommand{\F}{\mathbb F}
\newcommand{\Z}{\mathbb Z}
\DeclareMathOperator{\Aut}{Aut}
\DeclareMathOperator{\cd}{cd}
\DeclareMathOperator{\hd}{hd}
\DeclareMathOperator{\Epi}{Epi}
\DeclareMathOperator{\Ext}{Ext}
\DeclareMathOperator{\Hom}{Hom}
\DeclareMathOperator{\Homeo}{Homeo}
\DeclareMathOperator{\inc}{inc}
\DeclareMathOperator{\rank}{rank}
\newcommand{\ol}[1]{{\overline{#1}}}
\newcommand{\wh}[1]{{\widehat{#1}}}
\newcommand{\wt}[1]{{\widetilde{#1}}}
\begin{document}

\title[Aspherical 4-manifolds]{Aspherical 4-manifolds with elementary
amenable fundamental group}

\author{James F.  Davis \and J. A. Hillman }
\address{{Department of Mathematics, Indiana University,}
\newline
{Bloomington, IN 47405 USA} 
\newline
{School of Mathematics and Statistics, University of Sydney,}
\newline
Sydney,  NSW 2006, Australia }

\email{jfdavis@iu.edu, jonathan.hillman@sydney.edu.au}

\begin{abstract}
We classify the possible elementary amenable fundamental groups of compact aspherical  4-manifolds 
with boundary   and conclude that they are either polycyclic 
or solvable Baumslag-Solitar.    Since these groups are good and satisfy the Farrell-Jones Conjecture, one concludes that such manifolds satisfy topological rigidity: a homotopy equivalence which is a homeomorphism on the boundary is homotopic, relative to the boundary, to a homeomorphism.  We classify the closed 3-manifolds which arise as the boundary of a compact aspherical 4-manifold with elementary amenable fundamental group, generalizing results of Freedman and Quinn in the cases of trivial and infinite cyclic fundamental groups.
Moreover, two such 4-manifolds are homeomorphic if and only if 
their ``enhanced" peripheral group systems are equivalent,
and each such manifold is the boundary connected sum of a compact
aspherical 4-manifold with prime boundary and a
contractible 4-manifold. 

\end{abstract}

\keywords{aspherical, boundary, elementary amenable, 4-manifold, polycyclic}

\subjclass{57M05, 57K41, 20J05, 57R67, 57K10}

\maketitle

The goal of this paper is the classification of compact aspherical 4-manifolds 
with elementary amenable fundamental group and nonempty boundary.
Our focus on such manifolds reflects the following considerations.
Firstly,  aspherical manifolds are determined up to homotopy equivalence 
by their fundamental groups, and these groups have finite cohomological dimension.
Secondly, the classification of topological 4-manifolds with given homotopy type 
involves applying high-dimensional techniques such as surgery and the s-cobordism theorem, and elementary amenable groups form the largest well-understood class of 
groups over which these techniques are known to work in dimension 4
and which have finite cohomological dimension.

More precisely,  if a group $\pi$ is good, in the sense of \cite[Definition 12.12]{BKKPR},
then the Disc Embedding Conjecture (DEC) holds for 4-manifolds 
with fundamental group $\pi$.   
Surgery and the s-cobordism theorem are consequences of the DEC, 
and hence are valid for manifolds with good fundamental group.  
If $\pi$ is the fundamental group of a compact aspherical 4-manifold, 
then $\pi$ is currently known to be good if and only if $\pi$ is elementary amenable.  This follows from the result of Freedman-Quinn (see \cite[Part III]{BKKPR}) that elementary amenable groups are good, the result of Freedman-Teichner \cite{FT95}  and Krushkal-Quinn \cite{KQ} that groups with subexponential growth are good, and the fact that  there is no finitely presented group of intermediate growth  known which has finite cohomological dimension (see acknowledgements).
  
The above discussion motivates our first theorem.
  
 \begin{maintheorem} \label{A}
Let $M$ be a compact aspherical $4$-manifold with boundary $N$,
and let $\inc:N\to{M}$ be the inclusion.
If $\pi=\pi_1M$ is elementary amenable, then one of the following conditions holds:
\begin{enumerate}
 \setcounter{enumi}{-1}
\item$\pi=1$ and $N$ is a homology $3$-sphere;
\item$\pi\cong\mathbb{Z}$, $N$ is connected, and $\pi_1\inc$ 
is an epimorphism with perfect kernel 
(i.e.,  $N$ is a $\Z[\Z]$-homology $S^2\times{S^1}$);
\item$\pi\cong{BS(1,m)}$ for some $m\not=0$, $N$ is connected, 
$\pi_1\inc$ is an epimorphism and $H^i(\inc;\Z\pi)$ is an isomorphism
for $i\leq2$; 
\item$\pi$ is a polycyclic $PD_3$-group and either 
\begin{enumerate}
\item$N$ has 
two components and $\pi_1\inc_i$ is an epimorphism for $i=1,2$;
or 
\item$N$ is connected and $[\pi:\mathrm{Im}(\pi_1\inc)]=2$;
\end{enumerate}
 in each case the kernels are perfect; 
\item$\pi$ is a polycyclic $PD_4$-group and $N$ is empty.
\end{enumerate}
\end{maintheorem}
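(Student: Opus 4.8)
The plan is to organize the argument by the cohomological dimension $n=\cd\pi$, which equals $\cd M$ since $M$ is aspherical. As $M$ is a compact aspherical $4$-manifold, $n\le 4$, with $n=4$ exactly when $N=\emptyset$, because a compact $4$-manifold with nonempty boundary collapses onto a $3$-complex. Since $\cd\pi<\infty$, $\pi$ is torsion-free and finitely presented, so by the Hillman--Linnell structure theorem it is virtually solvable of Hirsch length $n$. I would then invoke the classification of such groups in low dimensions: $n=0$ gives $\pi=1$; by Stallings--Swan $n=1$ forces $\pi$ free, hence $\pi\cong\Z$ by amenability; $n=2$ gives $\pi\cong BS(1,m)$; and in the top cases the crucial input is that an elementary amenable group of cohomological dimension $n$ whose top cohomology $H^n(\pi;\Z\pi)$ is torsion-free must be a polycyclic $PD_n$-group---this packages the concentration of $H^\ast(\pi;\Z\pi)$ in degree $n$ with Bieri's theorem that a solvable duality group is polycyclic. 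Granting it, the closed case $n=4$ yields conclusion (4) immediately, since a closed aspherical manifold has $H^4(\pi;\Z\pi)\cong\Z$.

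For the boundary cases the single computational engine is Poincar\'e--Lefschetz duality on the \emph{universal} cover. Let $p\colon\wt{M}\to M$ be the universal covering; since $M$ is aspherical, $\wt{M}$ is contractible, and being simply connected it is orientable. Untwisted Lefschetz duality on the contractible manifold-with-boundary $(\wt{M},p^{-1}N)$, together with the identifications $H^i_c(\wt{M})\cong H^i(\pi;\Z\pi)$ and (from the homology sequence of the pair and contractibility of $\wt{M}$) $H_{4-i}(\wt{M},p^{-1}N)\cong\wt{H}_{3-i}(p^{-1}N)$, gives for every $i$ an isomorphism of abelian groups
\[
H^i(\pi;\Z\pi)\;\cong\;\wt{H}_{3-i}\bigl(p^{-1}N;\Z\bigr).
\]
When $M$ is non-orientable the $\pi$-action on the left is twisted by $w_1$, but this is irrelevant to the arguments below. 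The geometry of the boundary is now read off the right-hand side: the $\Z\pi$-module $H_0(p^{-1}N)=\bigoplus_j\Z[\pi/\image(\pi_1\inc_j)]$ records the components of $N$ and the images of the $\pi_1\inc_j$, while $H_1(p^{-1}N)=0$ is precisely the condition that each kernel $\ker(\pi_1\inc_j)$ be perfect.

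Each case now follows from the location of the top cohomology. For $n=0$ the manifold is contractible and the isomorphism forces $N$ to be a homology $3$-sphere. For $n=1$ we have $H^2(\pi;\Z\pi)=H^3(\pi;\Z\pi)=0$, so $\wt{H}_0(p^{-1}N)=\wt{H}_1(p^{-1}N)=0$: the boundary is connected, $\pi_1\inc$ is onto, and its kernel is perfect. For $n=2$ one still has $H^3(\pi;\Z\pi)=0$, giving a connected boundary with $\pi_1\inc$ onto, but now $H_1(p^{-1}N)\cong H^2(\pi;\Z\pi)$ may be nonzero, so the kernel need not be perfect; the surviving description is the (automatic) Lefschetz statement that $\inc^\ast$ is an isomorphism on $H^i(\,\cdot\,;\Z\pi)$ for $i\le 2$. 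This establishes (0), (1) and (2).

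The crux is $n=3$, and it is exactly here that the torsion-freeness input is decisive: since $H^3(\pi;\Z\pi)\cong\wt{H}_0(p^{-1}N)$ is a \emph{reduced zeroth} homology group, it is automatically free abelian, hence torsion-free. The structural input therefore forces $\pi$ to be a polycyclic $PD_3$-group, and in one stroke excludes every elementary amenable group of cohomological dimension $3$ that is not $PD_3$ (such as $BS(1,m)\times\Z$, whose top cohomology has $\Z$-torsion). Once $\pi$ is $PD_3$ we have $H^3(\pi;\Z\pi)\cong\Z$ and $H^2(\pi;\Z\pi)=0$, so $\wt{H}_0(p^{-1}N)\cong\Z$ forces $p^{-1}N$ to have exactly two components: either $N$ has two components each with $\pi_1\inc_i$ onto (3a), or $N$ is connected with image of index $2$ (3b); and $H_1(p^{-1}N)=0$ makes the kernels perfect. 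I expect the real work to lie in the packaged group-theoretic statement---concentration of $H^\ast(\pi;\Z\pi)$ in the top degree together with the duality-group criterion---since once it is available the topology contributes only the clean duality isomorphism and routine bookkeeping, the one delicate point being the orientation twist in the non-orientable case.
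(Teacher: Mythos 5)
Your duality bookkeeping is sound --- the isomorphism $H^i(\pi;\Z\pi)\cong\wt H_{3-i}(\partial\wt M)$ is exactly the paper's equation \eqref{PLD}, and the way you read off connectedness, surjectivity of $\pi_1\inc$, perfect kernels, and the two-component dichotomy from it matches Lemma \ref{components of boundary} and Theorem \ref{cd<n}. The fatal problem is the ``packaged'' structural input on which your cases $\cd\pi=3$ and $\cd\pi=4$ rest: the claim that an elementary amenable group of cohomological dimension $n$ with $H^n(\pi;\Z\pi)$ torsion-free must be a polycyclic $PD_n$-group is \emph{false}. By Kropholler's theorem \cite{Kr86} (quoted in the paper), every solvable group of type $FP$ with finite cohomological dimension is a \emph{duality} group, and by the Bieri--Eckmann characterization of duality groups the dualizing module $H^n(\pi;\Z\pi)$ of a duality group is \emph{always} torsion-free as an abelian group; so torsion-freeness of the top cohomology cannot separate Poincar\'e duality groups from mere duality groups. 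Concretely, $\Z\times BS(1,2)$ is solvable, of type $F$, of cohomological dimension $3$, and is a duality group of dimension $3$ (a product of duality groups is one), so its top cohomology is torsion-free; yet it is not polycyclic (it contains the non-finitely-generated subgroup $\Z[1/2]$) and hence, by Bieri's theorem that solvable \emph{Poincar\'e} duality groups are polycyclic, not $PD_3$ either. Your parenthetical assertion that the top cohomology of $BS(1,m)\times\Z$ has $\Z$-torsion is simply incorrect; indeed the paper itself remarks that $\Z\times BS(1,2)$ is a duality group of type $F$ with $\cd=3$ which Theorem \ref{A} excludes --- but the exclusion comes from the boundary topology, not from torsion in group cohomology. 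In effect you have conflated Bieri's theorem about solvable $PD_n$-groups with a statement about solvable duality groups, and the whole difficulty of the theorem lives in that gap.

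What the paper does instead in the crucial case $\cd\pi=3$ has no substitute in your outline: elementary amenable plus $\cd\pi=3$ gives solvable \cite{Hi24}; solvable plus $FP$ gives duality, so $H^k(\pi;\Z\pi)=0$ for $k\le2$ and every component of $\partial\wt M$ is acyclic by \eqref{PLD}; then the $PD_3$ property is extracted from the \emph{boundary}, not from $\pi$ alone --- for each boundary component the chain complex $C_*(\partial_iM;\Z[G_i])$, with $G_i=\mathrm{Im}(\pi_1\inc_i)$, is a finite free resolution of $\Z$, so each $G_i$ is a $PD_3$-group, hence (being solvable) polycyclic of Hirsch length $3$, and a Hirsch-length argument forces $[\pi:G_i]<\infty$, whence $\pi$ itself is a polycyclic $PD_3$-group. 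Two smaller gaps of the same character: in the closed case, knowing that $\pi$ is a virtually solvable $PD_4$-group still leaves the step from ``virtually polycyclic'' to ``polycyclic,'' which the paper handles with Lemma \ref{NNTFN} (finite subgroups of $GL(3,\Z)$) and the inspection of the $74$ flat $4$-manifold groups; and in the case $\cd\pi=2$, Gildenhuys's classification applies to \emph{solvable} groups, so one still needs the paper's Euler-characteristic argument to pass from virtually solvable to solvable before concluding $\pi\cong BS(1,m)$.
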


Thus the fundamental group of a compact aspherical 4-manifold with elementary amenable fundamental group is either polycyclic or a Baumslag-Solitar group $BS(1,m)$.  
The Farrell-Jones Conjecture (FJC) in $K$- and $L$-theory holds for the groups
allowed by Theorem \ref{A}, so not only can we use topological surgery and the s-cobordism theorem, but we can also compute the obstruction groups.

The Farrell-Jones Conjecture and topological surgery implies the Borel Uniqueness Conjecture for a compact aspherical 4-manifold $M$ with elementary amenable fundamental group:  any homotopy equivalence $h : M' \to M$ from a compact manifold $M'$ which restricts to a homeomorphism $h : \partial M' \xrightarrow{\cong} \partial M$ on the boundary is homotopic to a homeomorphism with the homotopy fixed on the boundary.  To obtain a full classification, we need an existence theorem: the converse to Theorem \ref{A} holds, and characterizes the possible boundaries of compact aspherical 4-manifolds with elementary amenable fundamental group:

\begin{maintheorem} \label{B}
Let $\pi$ be a finitely presented, elementary amenable group and $N$ be a closed $3$-manifold.
If $p:N\to{K(\pi,1)}$ is a map such that $\pi$, $N$ and $p$ satisfy
one of the conditions of Theorem \ref{A} (with $p$ for $\inc$) 
then there is a compact aspherical $4$-manifold $M$ with $\pi_1M\cong\pi$ and
boundary $N$, and a homotopy equivalence $h:M\to K(\pi,1)$ such that
$p=h\circ\inc$, where $\inc:N\to{M}$ is the inclusion.
\end{maintheorem}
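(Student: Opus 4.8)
The plan is to separate the argument into a homotopy-theoretic construction and a surgery-theoretic realization. First I would build a finite $4$-dimensional Poincar\'e--Lefschetz pair $(X,N)$ with $X\simeq K(\pi,1)$, with $\partial X=N$, and with the inclusion of $N$ inducing $p$ on fundamental groups. Since $\pi$ is good and satisfies the FJC, I would then convert $(X,N)$ into a genuine compact topological $4$-manifold $M$, homotopy equivalent to $X$ rel the already-manifold boundary $N$. Asphericity of $M$ is then free: $M\simeq X\simeq K(\pi,1)$, and the homotopy equivalence $h\colon M\to K(\pi,1)$ restricts on $N$ to $p$, giving $p=h\circ\inc$.

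For the construction, take $X=\mathrm{Cyl}(p)$, the mapping cylinder of $p$, so that $X\simeq K(\pi,1)$ and $N\hookrightarrow X$ induces $p$; note that the homotopy type of $X$ is $K(\pi,1)$ regardless of $p$, and that the role of $p$ and of the conditions (0)--(4) is entirely in the duality of the pair. Each group on the list of Theorem \ref{A} has a finite model for $K(\pi,1)$ --- a point, $S^1$, a finite aspherical $2$-complex for $BS(1,m)$, or a closed aspherical $3$- or $4$-manifold in the polycyclic cases --- so $X$ is a finite complex and, since $N$ is triangulable, $(X,N)$ is a finite pair (no Wall finiteness obstruction intervenes; in any case $\tilde K_0(\Z\pi)=0$ by the FJC). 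The substance of this step is to check Poincar\'e--Lefschetz duality. Here $N$ is a $PD_3$-complex, and conditions (0)--(4) are exactly what is needed to produce a fundamental class $[X,N]\in H_4(X,N;\Z^w)$ with $\partial[X,N]=[N]$ and to make the cap product $-\cap[X,N]\colon H^k(X;\Z\pi)\to H_{4-k}(X,N;\Z\pi)$ an isomorphism for all $k$; the orientation character $w$ is the one carried by $N$, recording in case (3b) the index-two inclusion. Case (4), with $N$ empty, requires nothing new, since $X=K(\pi,1)$ is already a finite $PD_4$-complex as $\pi$ is a $PD_4$-group; case (0) is the cone on the homology sphere, recovering Freedman.

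For the realization I would invoke the surgery program, which is licensed in dimension $4$ for the good group $\pi$ by the DEC. A degree-one normal map $(M,N)\to(X,N)$, fixed on the boundary, has surgery obstruction in $L_4(\Z\pi,w)$; equivalently, the total surgery obstruction of $(X,N)$ lives in the (relative) structure group $\mathbb{S}_4(X)$, and its vanishing is equivalent to $(X,N)$ being homotopy equivalent rel $N$ to a compact topological manifold. Because $X\simeq K(\pi,1)$ and $\pi$ satisfies the FJC, the assembly map $H_*(X;\mathbb{L})\to L_*(\Z\pi,w)$ is an isomorphism; the algebraic surgery exact sequence then forces $\mathbb{S}_*(X)=0$, so the obstruction vanishes and a topological reduction together with a normal map of zero surgery obstruction exists. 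Performing surgery (valid by the DEC) yields the desired $M$ with $\partial M=N$ and a homotopy equivalence $h\colon M\to X\simeq K(\pi,1)$ with $p=h\circ\inc$.

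I expect the main obstacle to be the duality verification in the construction step, above all the case $\pi\cong BS(1,m)$. There $\pi$ is not itself a Poincar\'e duality group (its cohomological dimension is $2$ but $H^2(\pi;\Z\pi)$ is not cyclic), so the duality of the pair cannot be read off from group-level duality and must instead be extracted from the hypothesis that $H^i(\inc;\Z\pi)$ is an isomorphism for $i\le2$, combined with the $PD_3$-duality of $N$; keeping track of the orientation character and of the cases with two boundary components or an index-two inclusion is where the care is needed. The one further point to be mindful of is that Ranicki's obstruction theory and the surgery exact sequence are available in dimension $4$ only through the goodness of $\pi$ and the DEC, so the realization step genuinely relies on the hypothesis that $\pi$ is elementary amenable.
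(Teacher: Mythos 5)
Your proposal follows essentially the same route as the paper: the paper also takes $X$ to be the mapping cylinder of a map $N\to K(\pi,1)$ realizing $p$, verifies case by case (via the $PD$-pair criterion of the companion paper \cite{dh1}) that $(X,N)$ is a $PD_4$-pair --- with the $BS(1,m)$ case handled exactly by the hypothesis that $H^i(\inc;\Z\pi)$ is an isomorphism for $i\leq 2$, and the closed case (4) deferred to \cite{AJ} --- and then invokes the Borel Existence Theorem for pairs from \cite{dh1}, whose content is precisely the Ranicki-style surgery/FJC/DEC realization argument you sketch. The only difference is organizational: the paper outsources both the duality criterion and the manifold realization to \cite{dh1} rather than running the algebraic surgery machinery in-house.
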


The closed case of Theorem \ref{B}, corresponding to case (4) of Theorem \ref{A},
was settled in \cite{AJ}.
When $N$ is nonempty,  
Theorem \ref{B} is an immediate consequence of a result from \cite{dh1} on 
realizing such maps $p$ by the boundary inclusions of $PD_4$-pairs
(see Theorem \ref{PD4pair} below),
and on the Borel Existence Conjecture for pairs with $\pi$ elementary amenable \cite{dh1}. 

\begin{maintheorem} \label{C}
Let $M$ and $\widehat{M}$ be compact aspherical $4$-manifolds with elementary amenable fundamental groups
and orientable boundaries.
Then $M$ and $\widehat{M}$ are homeomorphic if and only if
their enhanced peripheral systems are equivalent.
\end{maintheorem}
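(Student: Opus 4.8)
The plan is to prove the two directions separately, with the forward implication being essentially formal and the converse resting on two pillars already assembled in the paper: asphericity, which lets us reconstruct the homotopy type of the pair from group-theoretic data, and the Borel Uniqueness Conjecture, which is available because the groups permitted by Theorem \ref{A} are good and satisfy the Farrell--Jones Conjecture. For the \emph{only if} direction I would simply observe that a homeomorphism $f\colon(M,\partial M)\to(\widehat M,\partial\widehat M)$ restricts to a homeomorphism of boundaries and induces an isomorphism $\pi_1M\to\pi_1\widehat M$ carrying the peripheral subgroups of $M$ to those of $\widehat M$ and respecting the chosen orientations; this is precisely an equivalence of enhanced peripheral systems. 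The content is entirely in the converse, so I would suppose given an equivalence $\Phi$ of enhanced peripheral systems and aim to produce a homeomorphism by first building a homotopy equivalence of pairs restricting to a homeomorphism on $\partial M$.

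The first step of the converse is to extract from $\Phi$ an isomorphism $\phi\colon\pi_1M\xrightarrow{\cong}\pi_1\widehat M$ together with a homeomorphism $g\colon\partial M\xrightarrow{\cong}\partial\widehat M$ that matches boundary components and is compatible with $\phi$ on peripheral subgroups up to conjugacy. Here I would invoke the classification of the boundary components furnished by Theorem \ref{A}: each component of $\partial M$ is a $\Z[\Z]$-homology $S^2\times S^1$, a polycyclic $PD_3$-manifold, or an index-two or two-component analogue thereof, and the enhancement together with the peripheral data is designed to pin down the oriented homeomorphism type of each such component, yielding $g$. The second step is to promote $(\phi,g)$ to a homotopy equivalence of pairs. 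Since $\widehat M$ is aspherical it is a $K(\pi_1\widehat M,1)$, so the composite $\partial M\xrightarrow{g}\partial\widehat M\hookrightarrow\widehat M$ extends, uniquely up to homotopy rel $\partial M$, to a map $f\colon M\to\widehat M$ realizing $\phi$ on $\pi_1$; the extension exists because $\phi$ extends the homomorphism induced by the boundary map, and the obstructions to extending over the higher cells of $M$ lie in the vanishing higher homotopy groups of $\widehat M$. As $\phi$ is an isomorphism and both spaces are aspherical, $f$ is a homotopy equivalence, and since $f|_{\partial M}=g$ is a homotopy equivalence, the five lemma applied to the long exact sequences of the pairs shows $f$ is a homotopy equivalence of pairs.

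The final step is to upgrade $f$ to a homeomorphism. We now have a homotopy equivalence $f\colon M\to\widehat M$ restricting to the homeomorphism $g$ on $\partial M$. Because $\pi_1\widehat M$ is good and satisfies the Farrell--Jones Conjecture, the relevant Whitehead group vanishes and the Borel Uniqueness Conjecture applies as in the discussion preceding Theorem \ref{B}, so $f$ is homotopic rel $\partial M$ to a homeomorphism; hence $M\cong\widehat M$. I expect the main obstacle to lie in the first part of the second step, namely verifying that the equivalence $\Phi$ genuinely determines a boundary homeomorphism $g$ compatible with $\phi$ and with orientations. This is where the 3-manifold input is essential: one must know that the boundary components appearing in Theorem \ref{A} are determined up to oriented homeomorphism by their peripheral and enhanced data, so that the bookkeeping of the enhanced peripheral system is strong enough to force the existence of $g$. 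Once $g$ is in hand, the asphericity extension and the cited rigidity are formal.
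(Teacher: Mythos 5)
Your skeleton is the same as the paper's: produce a homeomorphism of boundaries compatible with an isomorphism of fundamental groups, extend it to a homotopy equivalence of pairs by obstruction theory and asphericity, then invoke the Borel Uniqueness Conjecture (Corollary \ref{FJC for EA in dim 4}) to upgrade to a homeomorphism. The forward direction and your last two steps are fine. But there is a genuine gap at exactly the point you flag and then defer: you assert, rather than prove, that an equivalence of enhanced peripheral systems yields a homeomorphism $g\colon\partial M\to\partial\widehat{M}$. This is the mathematical heart of the theorem, and it is not a bookkeeping matter. Theorem \ref{A} gives only homological constraints on the boundary components; it does not say they are prime, aspherical, or determined by their fundamental groups, and in general they are none of these: by Lemma \ref{ZHS summand} a boundary component is a connected sum $N_0\#\Sigma$ of a prime manifold with aspherical homology spheres and copies of $S^3/I^*$. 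The paper's own example $C\natural C$ versus $C\natural(-C)$, whose boundaries $P\#P$ and $P\#(-P)$ (with $P=S^3/I^*$) have identical peripheral systems but are not homeomorphic, shows that the claim you take as given is false without the enhancement and is nontrivial with it.

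Concretely, the missing argument (which occupies most of the paper's proof) runs as follows: (i) Lemma \ref{ZHS summand} identifies the prime decomposition of $N$ and $\widehat{N}$ and, crucially, rules out lens space summands --- the one family of closed orientable prime $3$-manifolds not determined by $\pi_1$; (ii) the isomorphism $\varphi\colon\pi_1N\to\pi_1\widehat{N}$ must be split, using uniqueness of free product and prime decompositions, as a free product $\varphi_0*\cdots*\varphi_r$ of isomorphisms between the fundamental groups of matching prime summands; (iii) each $\varphi_i$ is then realized by a homeomorphism $h_i\colon N_i\to\widehat{N}_i$ via the geometrization-based rigidity theorem \cite[Theorem 1.4.3]{BBBMP}; (iv) the enhancement is used to arrange that each $h_i$ is orientation-preserving --- for aspherical summands and $S^3/I^*$ this uses the isomorphism $H_3(N_i)\cong H_3(\pi_1N_i)$ induced by the classifying map, while $S^2\times S^1$ summands need a separate argument (compose with an orientation-reversing self-homeomorphism inducing the identity on $\pi_1$) --- so that the connected sum $h=h_0\#\cdots\#h_r$ is a well-defined homeomorphism $N\to\widehat{N}$; without orientation control the connected sum of the $h_i$ need not exist as a homeomorphism, which is precisely the $P\#P$ versus $P\#(-P)$ phenomenon. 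Steps (i)--(iv) are what "the enhancement pins down the oriented homeomorphism type" actually means, and your proposal contains none of them.
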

 
A enhanced peripheral system consists of the fundamental group with orientation character,
the homomorphisms on $\pi_1$ induced by the inclusions 
of the boundary components, and data on the fundamental classes.
A full definition is given in \S\ref{section:uniqueness}.

A consequence of Theorem \ref{C} is that a compact aspherical 4-manifold with elementary amenable fundamental group 
and orientable boundary 
is the boundary connected sum of a compact aspherical 4-manifold 
with prime boundary and a contractible 4-manifold.
The orientability condition is need  because the Geometrization Theorem of 
Perelman and Thurston has not yet been proven for nonorientable 
3-manifolds.

In the closed case, $\pi$ is a polycyclic $PD_4$-group,
and the homeomorphism types of such 4-manifolds are classified  
by their fundamental groups.
Theorems B and C lead quickly to a similar classification for the bounded case,
which we shall state here in a simplified form, for the cases with connected boundary.

\begin{maintheorem} \label{D}
Compact aspherical 4-manifolds with elementary amenable fundamental group $\pi$ 
and connected nonempty boundary $N$ are classified up to homeomorphism by 
the isomorphism type of $\pi$, 
the homeomorphism type of $N$ and the equivalence class 
of the homomorphism $\pi_1\inc:\pi_1N\to\pi$ under the actions of $\Homeo(N,*)$ and $\Aut(\pi)$.  Every set of such invariants satisfying the conditions of Theorem \ref{A}
is realizable. 
\end{maintheorem}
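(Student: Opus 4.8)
The plan is to deduce Theorem~D from Theorems~B and~C, which I take as already established. The strategy is to show that in the connected-boundary case the rather intricate data of an \emph{enhanced} peripheral system collapses down to the three invariants listed: the isomorphism type of $\pi$, the homeomorphism type of $N$, and the $\Homeo(N,*)\times\Aut(\pi)$-orbit of the homomorphism $\pi_1\inc$. So the first task is bookkeeping: I would compare the definition of an enhanced peripheral system (fundamental group with orientation character, boundary inclusions on $\pi_1$, and fundamental-class data) against this simplified list, and argue that when $N$ is connected and orientable the extra pieces are either determined by or absorbed into the listed invariants.

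Concretely, I would proceed in four steps. First, classification up to homeomorphism by enhanced peripheral systems is exactly the content of Theorem~C, so I would invoke it directly and reduce everything to comparing enhanced peripheral systems. Second, I would show that for connected $N$ the orientation character and the fundamental-class data contribute nothing beyond the homomorphism $\pi_1\inc$ and the manifolds $N$ and $K(\pi,1)$: since $M$ is aspherical with $\pi_1M=\pi$, the orientation character $w\colon\pi\to\{\pm1\}$ is intrinsic to $\pi$ (and to whether $N$ orientable forces $M$ orientable along the boundary), and the relative fundamental class $[M,\partial M]\in H_4(\pi,\pi_1\inc)$ lives in a group determined by the algebraic data, so an isomorphism matching groups, $N$, and $\pi_1\inc$ automatically matches the enhanced structure. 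Third, I would account for the two natural symmetries: self-homeomorphisms of $N$ act through $\Homeo(N,*)$ on $\pi_1N$, and automorphisms of $\pi$ act through $\Aut(\pi)$, and an equivalence of enhanced peripheral systems is precisely an isomorphism of $\pi$ carrying one inclusion to the other up to these actions; hence equivalence of enhanced systems is \emph{equivalent} to agreement of the three listed invariants. Fourth, for realizability I would invoke Theorem~B: any data $(\pi,N,p)$ meeting the hypotheses of Theorem~A is realized by some compact aspherical $M$ with $\partial M=N$ and $\pi_1\inc=p$, which is exactly the surjectivity of the invariant map onto the admissible tuples.

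The main obstacle I expect is in the third step, namely showing that an abstract equivalence of enhanced peripheral systems forces, and is forced by, the orbit-equality under $\Homeo(N,*)\times\Aut(\pi)$. The subtlety is that an equivalence of enhanced systems is \emph{a priori} an isomorphism on $\pi_1$ respecting the inclusion data, whereas the invariant in Theorem~D is recorded geometrically via self-homeomorphisms of $N$. Passing between the two requires knowing that every isomorphism $\pi_1N\to\pi_1N$ that is realized as part of an equivalence is induced by a homeomorphism of $N$ fixing the basepoint — this is where Geometrization (the Borel-type rigidity for the closed orientable $3$-manifold $N$, so that $\pi_1$-isomorphisms are realized homeomorphically) is used, and it explains why the statement is restricted to orientable $N$. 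Once that realization is in hand, the correspondence between the two notions of equivalence is a formal consequence, and combining it with Theorems~B and~C yields both the injectivity (distinct homeomorphism types $\Leftrightarrow$ inequivalent invariants) and the surjectivity (realizability) asserted in Theorem~D.
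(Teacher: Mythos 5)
Your proposal proves a strictly weaker statement than Theorem \ref{D}, and the gap is the orientability of $N$. Theorem \ref{D} carries no orientability hypothesis, and nonorientable boundaries genuinely occur in its scope: for example $M=D^3\,\tilde\times\,S^1$ with $N=S^2\,\tilde\times\,S^1$ (case $\pi\cong\Z$), twisted $D^2$-bundles over the Klein bottle ($\pi\cong BS(1,-1)$), and twisted $I$-bundles over closed aspherical $3$-manifolds whose chosen double cover is nonorientable (case (3b) of Theorem \ref{A}). Since you route everything through Theorem \ref{C}, whose hypotheses require orientable boundary, none of these cases is covered; and your parenthetical assertion that this ``explains why the statement is restricted to orientable $N$'' is a misreading --- it is not so restricted. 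The paper's own proof (Theorem \ref{thmD}) never invokes Theorem \ref{C}: given matching invariants, the homeomorphism $h:N\to\wh N$ is already part of the data, so one feeds $h$ and a compatible isomorphism $\theta:\pi_1M\to\pi_1\wh M$ into elementary obstruction theory (Proposition 9 of \cite{dh1}) to obtain a homotopy equivalence of pairs restricting to $h$, and then applies Corollary \ref{FJC for EA in dim 4} (Borel Uniqueness); neither step needs orientability. The enhancement, geometrization, and orientability in Theorem \ref{C} exist precisely to \emph{reconstruct} a boundary homeomorphism from group-theoretic data, and that reconstruction problem is absent in Theorem \ref{D}.

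Two further points. First, your ``main obstacle'' in step 3 is misdirected: for the classification you only need (i) invariants equal $\Rightarrow$ homeomorphic, which uses the formal direction (orbit equality $\Rightarrow$ equivalence of enhanced systems, since a homeomorphism of $N$ induces the group data and carries $[N]$ to $\pm[\wh N]$) followed by Theorem \ref{C}, and (ii) homeomorphic $\Rightarrow$ invariants equal, which is trivial. The direction you flag as hard --- that an abstract equivalence of enhanced systems is induced by a homeomorphism of $N$ --- is never needed, so building the argument on it imports geometrization (and hence the orientability restriction) gratuitously. Second, your claim in step 2 that the orientation character ``is intrinsic to $\pi$'' is false: the paper notes explicitly that $w_1M$ need not be determined by $\pi$ when $\partial M\neq\emptyset$ (compare $D^3\times S^1$ and $D^3\,\tilde\times\,S^1$). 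What is true, and what the argument needs, is that $w_1M$ is determined by $w_1N$ and $\pi_1\inc$ when $\pi_1\inc$ is onto (via $w_1N=w_1M\circ\pi_1\inc$), and by $w_\pi\cdot w_\partial$ in the index-two case of Theorem \ref{cd<n}. For orientable $N$ your outline can be repaired into a correct proof, but as written it does not establish Theorem \ref{D}.
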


What makes our results possible is the conjunction of 3-manifold theory 
(particularly the unique factorization theorem and the rigidity of closed orientable 
prime 3-manifolds), 
topological surgery (particularly in the Ranicki formulation)
and low-dimensional homological group theory.
The arguments extend without essential change to all groups $\pi$ with cohomological dimension
$\cd\pi\leq2$ and for which the Disc Embedding Conjecture (DEC) and 
Farrell-Jones Conjectures hold.
The DEC holds for all elementary amenable groups,
and the most optimistic current expectation is that it might hold 
for all groups with no noncyclic free subgroups.
In \S7 we give some evidence to suggest that groups with 
the latter property and $\cd\leq2$ may all be solvable.
Many of our arguments can easily by made in greater generality than elementary amenable,
and we shall do so where possible, without further comment.
Only in the case of groups $\pi$ with $\cd\pi=3$ does the hypothesis
``elementary amenable" seem to be a limitation which is essential for our arguments.

We feel that the study of compact aspherical manifolds is fundamental in four-dimensional topology in both the topological and smooth case.   The contractible case ($\pi_1 =1$) has been a persistent theme in the subject.   The questions of topological rigidity for a product of surfaces and the smooth rigidity for the 4-torus are two major open problems.   Our paper classifies the good fundamental groups of compact aspherical manifolds $M$ and all the possible 3-manifold boundaries.   The paper \cite{DKP} shows that topologically classifying all compact manifolds with fundamental group $\pi_1M$ and boundary $\partial M$ is a tractable problem, generalizing the famous results in the simply-connected case and infinite cyclic fundamental group case.

Our paper is organized as follows.
In \S1 we present the notation and terminology for the groups that arise here.
The next section summarizes the basic properties of the peripheral systems 
of compact aspherical manifolds.
These are standard consequences of Poincar\'e-Lefshetz duality.
(The details of the proofs are given in \cite{dh3}.)
We then focus on dimension 4 in \S3, where we prove Theorem \ref{A}.
We prove the main existence and uniqueness results, Theorems B and C, 
in \S4 and \S5 respectively.  
We also prove Lemma \ref{ZHS summand} which analyzes the 3-manifold boundaries.
In many cases (but not all) the compact aspherical 4-manifold $M$ is determined 
up to homeomorphism by $\pi_1M$ and $\partial{M}$. 
We give examples showing that further invariants are need in \S6,
and we prove Theorem \ref{D} (in a more precise formulation) there.
The  final section considers briefly possible extensions of this work.

\section{Group theoretic notation and terminology}

The class of {\em elementary amenable groups} is the smallest class of groups containing all finite groups and all abelian groups and which is closed under subgroups, quotients, extensions, and directed colimits.  
The elementary amenable groups that arise in Theorem \ref{A} 
are either polycyclic or are solvable Baumslag-Solitar groups. 
We will assume basic facts about polycyclic groups; 
a suitable reference is \cite[Chapter 5]{Ro}.  
The number of infinite cyclic quotients in a subnormal series for a virtually polycyclic group 
$G$ is called the {\em Hirsch length} $h(G)$ \cite[5.4.13]{Ro}.
The notion of Hirsch length extends readily to virtually solvable groups
\cite[page 407]{Ro}, 
and has been extended further to all elementary amenable groups
\cite[\S1.5]{HiF}.
The solvable Baumslag-Solitar groups are the groups $BS(1,m)$
with presentation $\langle{a,t}\mid{tat^{-1}=a^m}\rangle$,
for some $m\not=0$.
These groups are semidirect products $\Z[1/m] \rtimes \Z$ and include the fundamental group of the torus $\mathbb{Z}^2=BS(1,1)$ and of the Klein bottle $BS(1,-1)$. 

A group $G$ has one end if and only if $G$ is infinite and $H^1(G; \Z G) = 0$ (see \cite{DD89}).   
Nontrivial torsion-free elementary amenable groups other than $\Z$ 
have one end.
The {\em cohomological dimension} of $G$ is 
$$
\cd G = \sup~\{n \mid H^n(G; A) \not = 0 \text{ for some $\Z{G}$-module $A$}. \}
$$
There is a similar notion of homological dimension $\hd G$,
and $\hd G\leq\cd G\leq\hd G+1$ if $G$ is countable \cite[4.6]{Bieri}.
If $G$ is elementary amenable and $\cd G<\infty$ then $G$ is virtually solvable \cite{HL92}.
If $G$ is torsion-free and solvable, 
with Hirsch length $h(G)$,
then $\hd G=h(G)$  \cite[7.1]{Bieri}.

A group $G$ has type $FP$ if the augmentation left $\Z G$-module $\Z$ has 
a finite length resolution by finitely generated projective $\Z G$-modules.
The Euler characteristic  $\chi(G)=\sum_{i\geq0}(-1)^i\rank H_iG$
is then well-defined.
The group $G$ is a {\em duality group} of dimension $n$
if it has type $FP$ and $H^k(G; \Z G)=0$  for $k \not = n$.
A duality group is torsion-free and has cohomological dimension $n$.  

A solvable group is {\em constructable} if it can be built up 
from the trivial group by a finite sequence of finite extensions 
and ascending HNN extensions \cite{BB76}.
If $S$ is solvable and $\cd S<\infty$ then $\cd S=\hd S\Leftrightarrow{S}$ has type 
$FP\Leftrightarrow{S}$ is a duality group $\Leftrightarrow{S}$ is constructable \cite{Kr86}.
In particular,  if $S$ is constructable then $\cd S=h(S)$ (by  \cite[7.1]{Bieri} again).

A group $\pi$ is a $PD_n$-group (a {\em Poincar\'e duality group} 
of dimension $n$) if 
$$
H^k(\pi; \Z\pi) =
\begin{cases}
 \Z & k = n \\
 0 & k \not  = n
\end{cases}
$$
and $\pi$ has type $FP$.
Since $\Z\pi$ is a $(\Z\pi,\Z\pi)$-bimodule, 
$H^n(\pi; \Z\pi)$ is a right $\Z\pi$-module.   
The corresponding homomorphism $w: \pi \to \Z^\times$ is called
the {\it orientation character\/} of the Poincar\'e duality group $\pi$.
Let $\Z^w$ be the $(\Z\pi,\Z\pi)$-bimodule which is additively the infinite cyclic group 
but with  $g . n = n.g =  w(g) n$,
and let $\varepsilon_w:\Z\pi\to\Z^w$ be the additive extension of $w$ 
(the {\it $w$-twisted augmentation homomorphism}).
Then $H^n(\pi; \Z\pi) =\Z^w$ as right $\Z\pi$-modules.

A $PD_n$-group $\pi$ satisfies Poincar\'e duality,
in that $H^k(\pi;\Z\pi)$ is isomorphic to $H_{n-k}(\pi;\Z\pi)$ as an abelian group,
for all $k\geq0$.
However, if we wish an isomorphism of (left) $\Z\pi$-modules, 
we need to replace $H^k(\pi;\Z\pi)$ by the conjugate module,
defined in terms of $w$.
If $B$ is a right $\Z\pi$-module, let $\ol{B}$ be the left module with the same underlying group and $\Z\pi$-module structure determined by $g.b=w(g)rg^{-1}$, for all $b\in{B}$ and $g\in\pi$. 
If $L$ is a left $\Z\pi$-module then $\Hom_{\Z\pi}(L,\Z\pi)$ is a right module, and so the {\it conjugate dual}
$L^\dagger=\ol{\Hom_{\Z\pi}(L,\Z\pi)}$ is again a left module.
If $M$ is a finitely generated free left $\Z\pi$-module then $M^\dagger$ is free, and is {\it non}-canonically isomorphic to $M$.
There are Poincar\'e duality isomorphisms
$\ol{H^k(\pi;\Z\pi)}\cong H_{n-k}(\pi;\Z\pi)$ (see \cite{dh1}).
Poincar\'e duality groups were first studied by Johnson and Wall, 
and duality groups are due to Bieri and Eckmann; 
a basic reference is the book of Brown \cite{Brown}.

If $\rho$ is a subgroup of finite index in a torsion-free group $\pi$
then $\rho$ is a $PD_n$-group if and only if $\pi$ is.  
Solvable Poincar\'e duality groups are polycyclic \cite{Bieri}, a torsion-free polycyclic group of Hirsch length $n$ is a $PD_n$-group, and, in fact, is the fundamental group of a closed aspherical smooth $n$-manifold \cite{AJ}.
The solvable Baumslag-Solitar groups $BS(1,m), m \not = 0$ have cohomological dimension two and,  
for $m \not = \pm 1$,
they are neither polycyclic nor Poincar\'e duality groups.  

\section{General remarks on the peripheral systems of aspherical manifolds}

Basic invariants of a manifold with boundary include its fundamental group, 
the number of boundary components, 
and the fundamental groups of the boundary components.     
We formalize this by introducing the notion of a peripheral system and 
then study this applied to aspherical manifolds.
Note that we will always assume our manifolds with boundary are connected, 
although, of course, we will not assume the boundary is connected.

Let $M$ be a compact $n$-manifold with boundary
$\partial{M}=\coprod_{i=1}^k\partial_i{M}$.  
Let $\inc_i : \partial_i{M} \to M$ be the inclusion of the $i$-th boundary component.   
(If $k=1$ we shall drop the label $i=1$.) 
Choosing base points $x_0 \in M$ and $x_i \in \partial_iM$ and paths $\gamma_i$ from $x_i$ to $x_0$, define the {\em fundamental group system of $M$} to be the tuple 
$(\pi_1M , \pi_1 \partial_1M, \dots, \pi_1 \partial_kM, \pi_1\inc_1, \dots , \pi_1\inc_k)$.   
The {\em orientation character} of $M$ is the homomorphism 
$w =w_1M  : \pi_1M \to \Z^{\times}= \{\pm 1\}$.  
The {\em peripheral system of $M$} is the fundamental group system together with the orientation character.

An isomorphism $(G, G_1, \dots, G_k, j_1: G_1 \to G, \dots, j_k : G_k \to G, w) \to (G', G'_1, \dots, G'_k, j'_1: G'_1 \to G', \dots, j'_k : G'_k \to G',w') $ of peripheral systems consists of a permutation $\sigma \in S_k$, group isomorphisms $\theta : G \to G'$, $\theta_i : G_i \to G'_{\sigma(i)}$, and elements $g_i \in G$ so that $w= w' \circ \theta$ and $\theta_i \circ j_i \circ c_{g_i} = j'_i \circ \theta_{\sigma(i)}$ where $c_{g_i}$ is conjugation by $g_i$.   The isomorphism class of the peripheral system is an invariant of $M$, independent of the choice of base points and paths.

A  group is {\em of type $F$} it is the fundamental group of a finite aspherical complex.
If $M$ is a compact aspherical $n$-manifold then $\pi=\pi_1M$ is of type $F$, 
and if $M$ is closed then $\pi$  is a $PD_n$-group with orientation character $w=w_1M$.
If $\partial{M}$ is nonempty then $\pi$ is no longer a $PD_n$-group, 
and $w_1M$ may not be determined by $\pi$.
In this case there are Poincar\'e-Lefshetz duality isomorphisms
$\overline{H^i(M;\Z\pi)}\cong{H_{n-i}(M,\partial{M};\Z\pi)}$,
where the conjugation uses the anti-involution induced by $w=w_1M$.

The first two results are straightforward consequences of Poincar\'e-Lefshetz duality 
with coefficients $\mathbb{F}_2$,  $\Z$ or $\Z\pi$,
for aspherical manifolds with boundary.
See \cite{dh3} for proofs of Lemma \ref{components of boundary}
and Theorem \ref{cd<n}.

\begin{lemma} \label{components of boundary}
Let $M$ be a compact aspherical $n$-manifold with fundamental group $\pi$.
\label{boundary components}
\begin{enumerate}
\item \label{cdpin} $\cd \pi =n \Longleftrightarrow \partial M$ is empty.
\item \label{cdpin-2}$\cd \pi \leq{n-2} \Longrightarrow \partial M$ has one component.
\item \label{pdn-1} $ \pi$ is $PD_{n-1} \Longrightarrow \partial M$ has one or two components.
\qed
\end{enumerate}
\end{lemma}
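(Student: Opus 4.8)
The plan is to read off the number of boundary components from the $\Z\pi$-module $H_0(\partial M;\Z\pi)$, and to control that module through Poincar\'e--Lefschetz duality. Since $M$ is aspherical, $M\simeq K(\pi,1)$ and the universal cover $\wt M$ is contractible; (co)homology of $M$ with $\Z\pi$-coefficients is that of $\wt M$ with $\Z$-coefficients and agrees with the (co)homology of $\pi$. Writing $\partial_1M,\dots,\partial_kM$ for the boundary components, the preimage of $\partial_iM$ in $\wt M$ has components indexed by $\pi/\image(\pi_1\inc_i)$, so
\[
H_0(\partial M;\Z\pi)=H_0(\partial\wt M;\Z)=\bigoplus_{i=1}^{k}\Z[\pi/\image(\pi_1\inc_i)],
\]
an abelian group of $\Z$-rank $\sum_{i=1}^{k}[\pi:\image(\pi_1\inc_i)]\geq k$. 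Thus it suffices to compute this rank, and for that I will feed the hypothesis on $\pi$ through the duality isomorphisms $\overline{H^{j}(M;\Z\pi)}\cong H_{n-j}(M,\partial M;\Z\pi)$ and the long exact sequence of the pair $(\wt M,\partial\wt M)$, exploiting that $\wt M$ is contractible.

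For (1), the implication $\partial M=\emptyset\Rightarrow\cd\pi=n$ is immediate: a closed aspherical manifold has $\pi$ a $PD_n$-group, hence a duality group of dimension $n$, so $\cd\pi=n$. For the converse I argue contrapositively. If $\partial M\neq\emptyset$ then $\partial\wt M\neq\emptyset$, so $H_0(\partial\wt M)\to H_0(\wt M)=\Z$ is onto and $H_0(M,\partial M;\Z\pi)=H_0(\wt M,\partial\wt M)=0$; by duality $H^{n}(\pi;\Z\pi)=\overline{H^{n}(M;\Z\pi)}=0$. Since $M$ is a compact aspherical manifold, $\pi$ is of type $F$, hence of type $FP$, and for such groups cohomological dimension is detected by $\Z\pi$-coefficients, i.e.\ $\cd\pi=\max\{j:H^{j}(\pi;\Z\pi)\neq0\}$ (see \cite{Bieri}). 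Hence $\cd\pi\leq n-1<n$. This detection step is the one place where finiteness of $\pi$ is essential, and I expect it to be the main point to get right; everything else is formal.

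For (2) and (3), note first that $\partial M\neq\emptyset$ in both cases, since otherwise $M$ is closed and $\cd\pi=n$ by (1), contradicting $\cd\pi\leq n-2$ in (2) and $\cd\pi=n-1$ in (3). Using the duality isomorphism $\overline{H^{n-1}(M;\Z\pi)}\cong H_1(M,\partial M;\Z\pi)$ together with the value of $H^{n-1}(\pi;\Z\pi)$ (which is $0$ when $\cd\pi\leq n-2$ and is $\Z$ when $\pi$ is $PD_{n-1}$), I find that $H_1(M,\partial M;\Z\pi)=H_1(\wt M,\partial\wt M)$ has $\Z$-rank $0$ in case (2) and $1$ in case (3). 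Since $\wt M$ is contractible, the long exact sequence of $(\wt M,\partial\wt M)$ collapses to
\[
0\to H_1(\wt M,\partial\wt M)\to H_0(\partial\wt M)\to H_0(\wt M)\to 0,
\]
with $H_0(\wt M)\cong\Z$. Taking ranks, which are additive over this short exact sequence, gives $\rank H_0(\partial M;\Z\pi)=1$ in case (2) and $=2$ in case (3).

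Comparing with the lower bound $\sum_{i}[\pi:\image(\pi_1\inc_i)]\geq k$ then forces $k=1$ in case (2) and $k\leq 2$ in case (3), as claimed. Moreover the computation refines the conclusion: in case (2) the single image has index $1$, so $\pi_1\inc$ is onto, while in case (3) either $k=1$ with $[\pi:\image(\pi_1\inc)]=2$, or $k=2$ with both $\pi_1\inc_i$ surjective. These are exactly the dichotomies appearing in Theorem~\ref{A}, which is why I would set the argument up to track indices rather than merely the count $k$.
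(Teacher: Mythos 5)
Your proof is correct, but it takes a genuinely different route from the paper's proof of this lemma. The paper counts the components of $\partial M$ directly, using the long exact sequence of the pair $(M,\partial M)$ with $\F_2$-coefficients and Poincar\'e--Lefschetz duality over $\F_2$: this turns the component count $\dim_{\F_2}H_0(\partial M;\F_2)=k$ into a question about $H^{n-1}(\pi;\F_2)$ and $H^n(\pi;\F_2)$, with no analysis of the universal cover and no orientation or module-structure bookkeeping (every manifold is $\F_2$-orientable). You instead count the components of $\partial\wt M$, via $H_0(\partial M;\Z\pi)\cong\bigoplus_i\Z[\pi/\image(\pi_1\inc_i)]$, and bound $k$ by the $\Z$-rank $\sum_i[\pi:\image(\pi_1\inc_i)]\geq k$. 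This is precisely the method the paper reserves for Theorem \ref{cd<n} (its equation \eqref{PLD} and parts (2) and (6)), and accordingly your argument proves more than the lemma asserts: it shows along the way that in case (2) the map $\pi_1\inc$ is an epimorphism, and in case (3) either $\partial M$ has two components with both $\pi_1\inc_i$ onto, or one component whose image has index $2$ --- refinements the paper only establishes in the subsequent theorem. The trade-off is that your route needs the covering-space identification of $H_0(\partial\wt M)$ and rank additivity over the short exact sequence (together with the observation that an infinite index would force infinite rank, which the computed rank excludes), whereas the paper's $\F_2$ argument is the more economical path to the bare count. Finally, your handling of the one delicate point in part (1) --- that $H^n(\pi;\Z\pi)=0$ implies $\cd\pi<n$ only because $\pi$ has the appropriate finiteness property (type $FP$) --- is explicit and correct; the paper uses the same fact tacitly when it concludes ``and so $\cd\pi<n$''.
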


An example to keep in mind is $X^k\times D^{n-k}$, where $X$ is a closed aspherical manifold.      
Another example is a compact contractible manifold with boundary 
a homology sphere.   
Note that the boundary components of an aspherical manifold need not be aspherical.   
A compact aspherical manifold can have many boundary components, 
for example the cartesian product of 
a torus and a 2-sphere with many open disks removed.

Now we move from the discussion of the number of boundary components to 
their fundamental groups. 
We shall write $H_i(X)$ and $H_i(G)$ for the integral homology of a space $X$ or group $G$, for simplicity.

\begin{theorem}
\label{cd<n}
Let $M$ be a compact aspherical $n$-manifold.
Then $\pi=\pi_1M$ is of type $F$ and $\cd\pi\leq4$.
\begin{enumerate}
\item If $\cd \pi < n$ and $\pi \not = 1$, then the image of $\pi_1\inc_i$ is infinite for all $i$.
\item If $\cd \pi \leq{n-2}$, then $\pi_1\inc$ is an epimorphism.
\item If $\cd \pi =n- 2$, then $\pi_1\inc$ has nontrivial kernel.
\item If $\cd \pi \leq1$, then  $\ker(\pi_1\inc)$ is a perfect group. 
\item If $\pi$ is a $PD_2$-group, then $\ker(\pi_1\inc)$ 
has abelianization  $\Z$.
\item If $\pi$ is a $PD_{n-1}$-group with orientation character $w_{\pi}$,
then each $\ker(\pi_1\inc_i)$ is a perfect group and either
\begin{enumerate}
\item $\partial M$ has two components and each $\pi_1\inc_i$ is an epimorphism and
$$
w_1M = w_\pi : \pi \to \Z^{\times}
$$ 
or
\item $\partial M$ has one component, 
$[\pi:\mathrm{Im}(\pi_1\inc)]=2$ and
$$
w_1M = w_{\pi} \cdot w_{\partial} : \pi \to \Z^{\times}
$$
where $w_\partial: \pi \to \Z^{\times}$ has kernel $\mathrm{Im}(\pi_1\inc)$.
\qed
\end{enumerate}
\end{enumerate}
\end{theorem}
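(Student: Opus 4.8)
The plan is to combine asphericity with Poincar\'e--Lefschetz duality. Since $M=K(\pi,1)$ we have $H^k(M;\Z\pi)=H^k(\pi;\Z\pi)$, and homology with $\Z\pi$-coefficients is the homology of the universal cover, so $H_k(M,\partial M;\Z\pi)=H_k(\wt M,\partial\wt M)$. The two dualities then read $H_j(\wt M,\partial\wt M)\cong\ol{H^{n-j}(\pi;\Z\pi)}$ and $H^i(M,\partial M;\Z\pi)\cong\ol{H_{n-i}(\wt M)}$, the bar denoting conjugation by $w=w_1M$. As $M$ is aspherical, $\wt M$ is contractible, so $H_0(\wt M)=\Z$ and $H_k(\wt M)=0$ for $k\geq1$. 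Feeding this into the long exact sequences of the pair $(\wt M,\partial\wt M)$ turns each assertion into a computation of a single group $H^{n-j}(\pi;\Z\pi)$, read off against the geometric data $H_0(\partial\wt M)=\bigoplus_i\Z[\pi/\mathrm{Im}(\pi_1\inc_i)]$ and the fact that $H_1(\partial\wt M)$ is a sum of copies of the abelianizations $\ker(\pi_1\inc_i)^{\mathrm{ab}}$.

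For (1) I would run the cohomology sequence. Contractibility gives $H^0(M,\partial M;\Z\pi)\cong\ol{H_n(\wt M)}=0$ and $H^1(M,\partial M;\Z\pi)\cong\ol{H_{n-1}(\wt M)}=0$, while $H^0(M;\Z\pi)=H^0(\pi;\Z\pi)=0$ because $\pi$ is infinite. Exactness then forces $H^0(\partial M;\Z\pi)=0$. But $H^0(\partial_iM;\Z\pi)=(\Z\pi)^{\mathrm{Im}(\pi_1\inc_i)}$, and for the torsion-free group $\pi$ this group of invariants vanishes exactly when the image is infinite; hence every image is infinite.

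Parts (2)--(5) come from the bottom of the homology sequence. When $\cd\pi\leq n-2$ the groups $H^n(\pi;\Z\pi)$ and $H^{n-1}(\pi;\Z\pi)$ vanish, so $H_0(\partial\wt M)\xrightarrow{\sim}H_0(\wt M)=\Z$; since $\partial M$ is then connected (Lemma \ref{components of boundary}), this says $\Z[\pi/\mathrm{Im}(\pi_1\inc)]\cong\Z$, i.e. $\pi_1\inc$ is onto, giving (2). With $\pi_1\inc$ onto, $\partial\wt M$ is connected and the isomorphism $H_1(\partial\wt M)\cong H_2(\wt M,\partial\wt M)\cong\ol{H^{n-2}(\pi;\Z\pi)}$ identifies $\ker(\pi_1\inc)^{\mathrm{ab}}$ with the conjugate of $H^{n-2}(\pi;\Z\pi)$. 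This group is nonzero when $\cd\pi=n-2$ (the nontrivial kernel of (3)), vanishes when $\cd\pi\leq n-3$ (the perfect kernel of (4)), and is $\Z$ when $\pi$ is $PD_{n-2}$ (giving (5)).

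The real work is (6). Now $H^{n-1}(\pi;\Z\pi)=\Z^{w_\pi}$ and all other $H^{\ast}(\pi;\Z\pi)$ vanish, so $H^{n-2}(\pi;\Z\pi)=0$ kills each $\ker(\pi_1\inc_i)^{\mathrm{ab}}$ via the isomorphism above, proving the kernels are perfect. The dichotomy and the orientation formulas come from the degree-zero extension
\[
0\to\ol{H^{n-1}(\pi;\Z\pi)}\to H_0(\partial\wt M)\xrightarrow{\varepsilon}\Z\to0,
\]
whose left term is $\ol{\Z^{w_\pi}}$, namely $\Z$ with $\pi$ acting through $w_1M\cdot w_\pi$ (the bar contributes $w_1M$). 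Since $H_0(\partial\wt M)=\bigoplus_i\Z[\pi/\mathrm{Im}(\pi_1\inc_i)]$ is free abelian and the sequence presents it as an extension of $\Z$ by $\Z$, it must have rank $2$; hence $\sum_i[\pi:\mathrm{Im}(\pi_1\inc_i)]=2$, which is exactly the two/one component dichotomy with the stated indices. I expect the main obstacle to be the final step: matching $\Z\pi$-module structures rather than mere ranks. In the two-component case $H_0(\partial\wt M)=\Z\oplus\Z$ carries the trivial action, so $\ker\varepsilon$ is trivial as a module and $w_1M\cdot w_\pi=1$; in the one-component index-two case the antidiagonal $\ker\varepsilon$ inside $\Z[\pi/\mathrm{Im}(\pi_1\inc)]$ is acted on by the sign character $w_\partial$ of $(\pi,\mathrm{Im}(\pi_1\inc))$, so $w_1M\cdot w_\pi=w_\partial$. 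These yield $w_1M=w_\pi$ and $w_1M=w_\pi\cdot w_\partial$ respectively.
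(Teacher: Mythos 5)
Your proposal is correct and follows essentially the same route as the paper's proof: Poincar\'e--Lefschetz duality for $(\wt M,\partial\wt M)$ combined with the long exact sequence of the pair, read off degree by degree, including the same identification $\ol{\Z^{w_\pi}}=\Z^{w_1M\cdot w_\pi}$ and the same analysis of $\ker\varepsilon$ inside $\bigoplus_i\Z[\pi/\mathrm{Im}(\pi_1\inc_i)]$ in part (6). The only (cosmetic) difference is in part (1), where you deduce infinite images from the vanishing of $H^0(\partial_iM;\Z\pi)=(\Z\pi)^{\mathrm{Im}(\pi_1\inc_i)}$ via the cohomology sequence, whereas the paper deduces it from $H_{n-1}(\partial\wt M)=0$ (no closed components of $\partial\wt M$).
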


We shall also use  the equation (from the proof of Theorem \ref{cd<n})
\begin{equation} \label{PLD}
 \ol{H^k(\pi; \Z\pi)} =\ol{H^k(M; \Z\pi)} \cong \wt H_{n-k-1}(\partial \wt M),
\end{equation}
which follows from  Poincar\'e-Lefschetz duality: 
\[
\ol{H^{k}(M; \Z\pi)} \cong H_{n-k}(M, \partial M; \Z\pi),
\]
 the identification $ H_{n-k}(M,\partial M; \Z\pi)=H_{n-k}(\wt M, \partial \wt M)$, 
 and the long exact sequence of the pair $(\wt M, \partial \wt M)$.

When $n=4$ we have a realization theorem,
on the homotopy level of $PD_4$-pairs.

\begin{theorem}
\label{PD4pair}
Let $\pi$ be a finitely presentable group,  $w:\pi\to\mathbb{Z}^\times$ be a homomorphism, 
$N$ be a closed $3$-manifold and $p:N\to{K(\pi,1)}$ be a map such that $w_1N=p^*w$.
Let $X$ be the mapping cylinder of $p$ and let $\kappa=\ker(\pi_1p)$.
Suppose that one of the following conditions holds:
\begin{enumerate}
 \setcounter{enumi}{-1}
\item$\pi=1$ and $N$ is a homology $3$-sphere;
\item$\cd \pi=1$, $N$ is connected, $p_*=\pi_1p$ is an epimorphism, and $\kappa$ is perfect;
\item$\cd \pi=2$, $N$ is connected,  $p_*$ is an epimorphism, and 
$H^i(p;\Z\pi)$ is an isomorphism for $i\leq2$;
\item either
\begin{enumerate}
\item$N$ is connected, $[\pi:p_*(\pi_1N)]=2$, $\kappa$ is perfect, 
and $\pi$ is a $PD_3$ group with orientation character $w\cdot w_\partial$, 
where $w_\partial: \pi \to \mathbb{Z}^\times$ is the homomorphism with kernel $p_*(\pi_1N)$; or
\item $N = N_1 \sqcup N_2$ has two components, 
$\pi$ is a $PD_3$-group with orientation character $w$, 
each $p_{i*} : \pi_1 N_i \to \pi$ ($i = 1,2$) is an epimorphism, 
and each $\kappa_i = \ker(p_{i*})$ ($i = 1,2)$ is perfect.
\end{enumerate}
\end{enumerate}
Then the pair $(X,N)$ is a $PD_4$-pair with orientation character $w$.
\end{theorem}

\begin{proof}
We shall verify the obviously necessary conditions,
namely that $H^i(X,N;\Z\pi)=0$ if $i\not=4$ and $H^4(X,N;\Z\pi)\cong\Z$.
Hence $(\pi,N,P_*)$ is a $PD_4$-group with boundary,
as defined in \cite{dh1}, and so $(X,N)$ is a $PD_4$-pair with orientation character $w$,
by \cite[Theorem 4]{dh1}.


This is clear if $\pi=1$ and $N$ is a homology $3$-sphere, 
for then $X$ is contractible and so
$H^i(X,N)\cong\widetilde{H}^{i-1}(N)$ for all $i$.

When $\pi\not=1$ the image of $\pi_1N$ in $\pi$ is infinite,
and so $H^0(N;\Z\pi)=H^0(X;\Z\pi)=0$.
Hence $H^0(X,N;\Z\pi)=0$.

If (1) holds then $H^1(\kappa;\Z\pi)=\Hom(\kappa,\Z\pi)=0$,
since $\kappa$ is perfect. 
It then follows from the 5-term exact sequence of low degree
for $\pi_1N$ as an extension of $\pi$ by $\kappa$ that $H^1(p;\Z\pi)$
is an isomorphism.
Since $H^i(X;\Z\pi)=0$ for $i>1$ and
$H^2(N;\Z\pi)\cong H_1(N;\Z\pi) = {H_1(\kappa)}=0$, 
by Poincar\'e duality for $N$,
we see that $H^i(X,N;\Z\pi)=0$ for $1\leq{i}=3$,
and $H^4(X,N;\Z\pi)\cong{H^3(N;\Z\pi)}\cong\Z$.

If (2) holds then $H^i(X;\Z\pi)=0$ for $i>2$, 
and the exact sequence of cohomology for $(X,N)$ gives $H^j(X,N;\Z\pi)=0$ 
for $j\leq3$ and an isomorphism from $H^3(N;\Z\pi)$ to $H^4(X,N;\Z\pi)$.
Hence $H^4(X,N;\Z\pi)\cong\Z$, by Poincar\'e duality for $N$.

If (3a) holds then $H^i(X;\Z\pi)=0$ for $i<3$, 
while $H^1(N;\Z\pi)=0$ since $\kappa$ is perfect and $\pi$ has one end,
and $H^2(N;\Z\pi)\cong{H_1(N;\Z\pi)}=H_1(\kappa;\Z\pi)=0$, by Poincar\'e duality for $N$.
Hence $H^j(X,N;\Z\pi)=0$ for $j\leq2$.
Let $C=\pi/p(\pi_1N)=\Z/2$.
Then $H^3(X;\Z\pi)\cong\Z$ and $H^3(N;\Z\pi)\cong\Z{C}$, 
and $H^3(p;\Z\pi)$ is the transfer.
Hence $H^3(X,N;\Z\pi)=0$ and $H^4(X,N;\Z\pi)\cong\Z$.

The final case is similar.
\end{proof}

Parts (0) and (1) of Theorem \ref{PD4pair} are covered by Lemma 3.2 of \cite{FT05},
but we wish to point up the parallels with Theorem \ref{A}.

The following is a consequence of the previous lemma and theorem.

\begin{corollary}
\label{pi-inc inj}
Let $M$ be a compact aspherical $n$-manifold with fundamental group $\pi$.
If $\cd\pi\leq{n-2}$ and $\pi_1\inc$ is injective, then
$\partial{M}$ is connected,
$\cd\pi\leq{n-3}$ and $\pi_1\inc$ is an isomorphism.
\qed
\end{corollary}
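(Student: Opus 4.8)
The plan is to read all three conclusions directly off Lemma~\ref{components of boundary} and Theorem~\ref{cd<n}: the two hypotheses, $\cd\pi\leq n-2$ and the injectivity of $\pi_1\inc$, feed straight into those results with essentially no extra work. First I would settle connectedness, since it is needed even to make sense of the unsubscripted symbol $\inc$. The hypothesis $\cd\pi\leq n-2$ is precisely that of Lemma~\ref{components of boundary}\eqref{cdpin-2}, which yields that $\partial M$ has a single component. Hence $\partial M$ is connected and there is a single inclusion $\inc\colon\partial M\to M$.

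Next, the same hypothesis $\cd\pi\leq n-2$ is that of Theorem~\ref{cd<n}(2), so $\pi_1\inc$ is an epimorphism. Together with the standing assumption that $\pi_1\inc$ is injective, this forces $\pi_1\inc$ to be an isomorphism, which is the last conclusion.

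Finally, I would pin down the cohomological dimension by ruling out the borderline value. We have $\cd\pi\leq n-2$ by hypothesis; suppose for contradiction that $\cd\pi=n-2$. Then Theorem~\ref{cd<n}(3) gives that $\pi_1\inc$ has nontrivial kernel, contradicting injectivity, so $\cd\pi\neq n-2$, i.e.\ $\cd\pi\leq n-3$. I anticipate no genuine obstacle here: the entire argument is carried by the two preceding results. The only points requiring care are that connectedness must be established first, so that the notation $\inc$ is unambiguous, and that one must invoke the sharp equality statement of Theorem~\ref{cd<n}(3) --- not merely an inequality --- to exclude the case $\cd\pi=n-2$.
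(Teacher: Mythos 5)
Your proof is correct and is exactly the argument the paper intends: the corollary is stated with no written proof precisely because it follows, as you show, from Lemma~\ref{components of boundary}(2) (connectedness), Theorem~\ref{cd<n}(2) (surjectivity, hence isomorphism), and Theorem~\ref{cd<n}(3) (ruling out $\cd\pi=n-2$ via the nontrivial kernel). Your ordering of the steps and the observation that connectedness must come first to make the notation $\inc$ meaningful are both sound.
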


\section{Fundamental group systems of aspherical 4-manifolds}

The work of many 3-manifold topologists, 
culminating in the Geometrization Theorem of Perelman and Thurston,
implies that compact aspherical orientable 3-manifolds are determined up to homeomorphism by their peripheral systems.
In Theorem \ref{C} we shall show that a similar result holds for 4-manifolds, 
provided that $\pi_1M$ is elementary amenable.   
In this section we examine their peripheral systems and prove Theorem \ref{A}.

A space or a group is {\em acyclic} if it has the $\Z$-homology of a point.

\begin{lemma}
\label{PD3 by perfect}
Let $N$ be a closed
 $3$-manifold such that $\nu=\pi_1N$ 
has an infinite perfect normal subgroup $\kappa$.
If $G=\nu/\kappa$ has one end, then $G$ is a $PD_3$-group.  Furthermore $H_1(\kappa)$ and $H_2(\kappa)$ both vanish.   
\end{lemma}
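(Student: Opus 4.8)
The plan is to pass to the regular covering $p\colon\wh{N}\to N$ corresponding to the normal subgroup $\kappa\trianglelefteq\nu$, so that $\pi_1\wh{N}=\kappa$, the deck group is $G=\nu/\kappa$, and $p$ is infinite-sheeted because $G$ is infinite. Thus $\wh{N}$ is a connected noncompact $3$-manifold without boundary, and its homology is almost that of a point: $H_0(\wh{N})=\Z$; $H_1(\wh{N})=\kappa^{ab}=0$ since $\kappa$ is perfect; and $H_i(\wh{N})=0$ for $i\geq3$, since a noncompact $3$-manifold carries no top homology and has the homotopy type of a $2$-complex. Moreover $\wh{N}$ is orientable, because $w_1\wh{N}\in\Hom(\kappa,\F_2)=0$, again by perfectness. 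Triangulating $N$ and lifting, the free cocompact action of $G$ on $\wh{N}$ makes the cellular chain complex $C_*=C_*(\wh{N})$ a finite complex of finitely generated free $\Z G$-modules computing $H_*(\wh{N})$.

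The heart of the argument is to show that the one remaining group $H_2(\wh{N})$ also vanishes, and this is where one-endedness of $G$ enters. Because $H_0(\wh{N})=\Z$ and $H_1(\wh{N})=0$, the tail
\[
C_2\xrightarrow{\partial_2}C_1\xrightarrow{\partial_1}C_0\xrightarrow{\varepsilon}\Z\to0
\]
is exact, hence is the start of a free $\Z G$-resolution of $\Z$; extend it to a resolution $P_*$ with $P_i=C_i$ for $i\leq2$. Since $H^1(G;\Z G)$ depends only on the degree ${\leq}2$ part, and since $\Hom_{\Z G}(C_*,\Z G)$ is precisely the complex $C^*_c(\wh{N})$ of compactly supported cochains, we obtain $H^1(G;\Z G)\cong H^1_c(\wh{N})$. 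Poincar\'e duality for the orientable open $3$-manifold $\wh{N}$ identifies $H^1_c(\wh{N})\cong H_2(\wh{N})$. But $G$ has one end, so $H^1(G;\Z G)=0$; therefore $H_2(\wh{N})=0$ and $\wh{N}$ is acyclic. I expect this identification $H^1(G;\Z G)\cong H^1_c(\wh{N})$ — matching the truncated free resolution against compactly supported cochains and keeping careful track of which duality and orientation data are legitimate — to be the main obstacle, and the step that must be written out with care.

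With $\wh{N}$ acyclic the rest is formal. Now $C_*(\wh{N})$ is a genuine finite free $\Z G$-resolution of $\Z$, so $G$ has type $FP$ and $\cd G\leq3$; and $H^k(G;\Z G)\cong H^k_c(\wh{N})\cong H_{3-k}(\wh{N})$, which is $\Z$ for $k=3$ and $0$ otherwise. Hence $G$ is a $PD_3$-group (and in particular torsion-free, though this was not assumed). Finally $H_1(\kappa)=\kappa^{ab}=0$ because $\kappa$ is perfect, while the Hopf exact sequence $\pi_2(\wh{N})\to H_2(\wh{N})\to H_2(\kappa)\to0$ exhibits $H_2(\kappa)$ as a quotient of $H_2(\wh{N})=0$, so $H_2(\kappa)=0$ as well.
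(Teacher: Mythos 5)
Your proof is correct, and its skeleton is the same as the paper's: pass to the cover with fundamental group $\kappa$, show it is acyclic (perfectness kills $H_1$, one-endedness kills $H_2$, noncompactness kills $H_i$ for $i\geq 3$), conclude that the cellular chain complex is a finite free $\Z G$-resolution of $\Z$, and read off the $PD_3$ property by duality, with $H_2(\kappa)$ then a quotient of $H_2$ of the cover by Hopf's theorem. Where you genuinely differ is in the step you rightly single out as the crux: identifying $H_2$ of the cover with $H^1(G;\Z G)$. The paper stays on the compact manifold $N$: Poincar\'e duality with local coefficients gives $H_2(N;\Z G)\cong\ol{H^1(N;\Z G)}$, and the Cartan--Leray spectral sequence of $N_\kappa\to N\to BG$ identifies $H^1(N;\Z G)$ with $H^1(G;\Z G)$, the relevant $E_2^{0,1}$-term vanishing again because $\kappa$ is perfect. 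You instead work entirely on the open cover: classical compactly supported Poincar\'e duality on $\wh N$ (legitimate since $\Hom(\kappa,\F_2)=0$ makes $\wh N$ orientable, whether or not $N$ is) gives $H^1_c(\wh N)\cong H_2(\wh N)$, and your truncated-resolution argument replaces the spectral sequence. The two routes are equivalent in content, since $\Hom_{\Z G}(C_*(\wh N),\Z G)\cong C^*_c(\wh N)$ is also the complex computing $H^*(N;\Z G)$; but your version is more elementary (no spectral sequence) and makes the orientation bookkeeping transparent, while the paper's is shorter and keeps all coefficients on $N$, absorbing the orientation character into the conjugate-module notation. Note also that both uses of perfectness --- exactness of $C_2\to C_1\to C_0\to\Z\to0$ for you, vanishing of $E_2^{0,1}=H^0(G;H^1(N_\kappa;\Z G))$ for the paper --- are the same fact in different guises.
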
 

\begin{proof} 
Let $N_\kappa \to N$ be  the cover with $\pi_1N_\kappa=\kappa$,
and let $C_*(N;\Z G) =C_*(N_\kappa)$ be the cellular chain complex for $N_\kappa$,
considered as a complex of finitely generated free left $\Z G$-modules.
We first claim that $N_\kappa$ is acyclic.
Note that $H_1(\kappa)=H_1(N_\kappa) = \kappa^{ab} = 0$.   
Note that 
$H_2(N_\kappa) = H_2(N;\Z G) \cong \ol{H^1(N; \Z G)} = 
\ol{H^1(G; \Z G)}  = 0$ 
where the penultimate equation holds using the cohomology spectral sequence of the homotopy fibre sequence  $N_\kappa \to N \to BG$ 
and the last equality holds since $G$ has one end.   
Hence also $H_2(\kappa)= 0$.
Next note that $H_i(N_\kappa) = 0$ for $i \geq 3$, since $N_\kappa$ is a connected open 3-manifold.

Hence $C_*(N;\Z G)$ is a finite free resolution for 
$H_0(N;\Z G)=\mathbb{Z}$ as a $\Z G$-module.  Since $H^k(G;\Z G) \cong H^k(N;\Z G) \cong H_{3-k}(N; \Z G) \cong \wt H_{3-k}(pt)$, we see that $G$ is a $PD_3$-group. 
\end{proof}

Note that $\kappa$ need not be acyclic.
Let $N=P\#\Sigma$, where $P$ is aspherical and $\Sigma\not=S^3$ a
$\Z$-homology 3-sphere, 
and let $\kappa$ be the normal closure of the image of $\pi_1\Sigma$ 
in $\nu=\pi_1N$.
Then $\kappa$ is an infinite perfect normal subgroup and 
$G=\nu/\kappa\cong\pi_1P$,  and so has one end.
It is easily seen that $H_3(\kappa)\not=0$.
However,  if $N$ is aspherical, then 
$\cd \kappa\leq2$ and $\kappa$ is acyclic.

As we have not found a convenient reference for the assertions 
about virtually polycyclic groups of Hirsch length 3 that we use in Theorem \ref{A}, 
we shall justify them here.
Our argument rests upon the fact that 
any chain of subgroups in a virtually polycyclic group is finite, 
an old result of Schur on groups with centres of finite index,
and knowledge of the finite subgroups of $GL(3,\mathbb{Z})$, 
in particular the fact that they are all polycyclic.   

\begin{lemma}
\label{NNTFN}
Let $G$ be a virtually polycyclic group with no nontrivial finite normal subgroup.
If $h=h(G)\leq3$ or if $h=4$ and $G$ is not virtually abelian,
then $G$ is polycyclic.
\end{lemma}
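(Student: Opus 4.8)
The plan is to reduce the statement to the assertion that $G$ is solvable. A virtually polycyclic group satisfies the maximal condition on subgroups (this is the finiteness of subgroup chains quoted above), and a solvable group satisfying the maximal condition is polycyclic, so ``$G$ polycyclic'' is equivalent to ``$G$ solvable''. I would then argue by strong induction on the Hirsch length $h=h(G)$. When $h=0$ the group $G$ is finite and normal in itself, so the hypothesis of no nontrivial finite normal subgroup forces $G=1$. For $h\geq1$ let $F=\mathrm{Fitt}(G)$ be the Fitting subgroup; it is nontrivial because $G$ is infinite virtually polycyclic, and its torsion subgroup, being finite and characteristic, is a finite normal subgroup of $G$ and hence trivial. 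Thus $F$ is torsion-free nilpotent with $1\leq h(F)\leq h$.

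The central device is the conjugation action $\psi\colon G\to\Aut(L)=GL(b,\Z)$ on $L=F^{\mathrm{ab}}/(\text{torsion})\cong\Z^{b}$. The kernel contains $F$, and since the automorphisms of a finitely generated torsion-free nilpotent group acting trivially on $L\otimes\R$ form a torsion-free nilpotent group (a Schur-type statement about groups whose centre has finite index), $\ker\psi$ is (nilpotent)-by-(nilpotent), hence solvable. Consequently $G$ is solvable if and only if $\Gamma:=\psi(G)\leq GL(b,\Z)$ is solvable, and moreover $h(\Gamma)\leq h-h(F)\leq h-1$.

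The hypotheses are consumed in a dimension count for $b=\mathrm{rank}(F^{\mathrm{ab}})\leq h(F)\leq h$. If $b$ equalled $h$ then $h([F,F])=h(F)-b=0$, so $[F,F]$ would be finite, hence trivial since $F$ is torsion-free, forcing $F\cong\Z^{h}$ and $G$ virtually abelian. Therefore, under the hypotheses of the lemma --- $h\leq3$, or $h=4$ with $G$ not virtually abelian --- one always has $b\leq3$. The borderline $b=4$, which is excluded, is genuinely realized: $\Z^{4}\rtimes A_{5}$ is virtually abelian of Hirsch length $4$, has no nontrivial finite normal subgroup, and is not solvable.

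It remains to show that the virtually polycyclic group $\Gamma\leq GL(b,\Z)$ with $b\leq3$ is solvable, and this is where I expect the main difficulty to lie, since the key input is arithmetic: every finite subgroup of $GL(3,\Z)$ is polycyclic (the three-dimensional crystallographic restriction, which fails in $GL(4,\Z)$ precisely because $A_{5}$ acts there). Using this, the finite radical $\tau(\Gamma)$, being a finite subgroup of $GL(3,\Z)$, is solvable, while $\Gamma/\tau(\Gamma)$ is virtually polycyclic with trivial finite radical and Hirsch length $h(\Gamma)\leq h-1\leq3$; the inductive hypothesis (the ``$h\leq3$'' clause) makes it polycyclic, so $\Gamma$, and hence $G$, is solvable. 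The delicate point throughout is this interplay between the dimension count and the crystallographic input: it is exactly the integrality in dimension at most three that forbids nonabelian simple sections, and the example $\Z^{4}\rtimes A_{5}$ shows that neither the bound $h\leq3$ nor the exclusion of the virtually abelian case at $h=4$ can be dropped.
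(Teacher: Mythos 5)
Your overall route is in essence the paper's: induct on Hirsch length, linearize $G$ on a lattice of rank at most three, quote the fact that finite subgroups of $GL(3,\Z)$ are polycyclic, and feed the quotient back into the induction; your dimension count showing $b\leq 3$ is exactly the paper's remark about the rank of its subgroup, with the Fitting subgroup $F$ in place of the paper's maximal characteristic abelian subgroup $A$. But this substitution creates a step that you have not justified and that, as written, is false. Conjugation does give a homomorphism from $\ker\psi$ to $\Aut(F)$ whose image lies in the automorphisms acting trivially on $L\otimes\R$, and that automorphism group is indeed torsion-free nilpotent (this is P.~Hall's stability theorem, or unipotence via the Malcev completion --- not a Schur-type statement, but true). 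The problem is the \emph{kernel} of this homomorphism: it is the centralizer $C_G(F)$, about which you say nothing, so ``$\ker\psi$ is (nilpotent)-by-(nilpotent)'' does not follow. The inference is genuinely invalid from your two stated ingredients alone: for $G=A_5\times\Z$ one has $F=\mathrm{Fitt}(G)=\Z$, $L=\Z$, and $\psi$ trivial, so $\ker\psi=G$ is not even solvable. (That $G$ has a nontrivial finite normal subgroup, so it does not contradict the lemma; but it shows that ``$F\subseteq\ker\psi$'' plus nilpotency of the stability group cannot by themselves yield solvability of $\ker\psi$ --- the standing hypothesis must be used here, and in your write-up it is not.)

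The gap can be closed by proving that $C_G(F)=Z(F)$ when $G$ has no nontrivial finite normal subgroup, and this is precisely where Schur's theorem --- the tool you relegate to a misattributed parenthesis --- does its work. Concretely: $C:=C_G(F)$ is normal in $G$; any nilpotent characteristic subgroup of $C$ is nilpotent and normal in $G$, hence lies in $F\cap C=Z(F)$, so $\mathrm{Fitt}(C)=Z(F)$, which is central in $C$. A polycyclic normal subgroup $P$ of finite index in $C$ then has $\mathrm{Fitt}(P)\subseteq Z(C)$, so $P$ centralizes its own Fitting subgroup and is therefore nilpotent (the Fitting subgroup of a polycyclic group is self-centralizing), whence $P\subseteq\mathrm{Fitt}(C)=Z(F)\subseteq Z(C)$. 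Thus $C/Z(C)$ is finite, Schur's theorem makes $C'$ finite, and $C'$ is normal in $G$, hence trivial; so $C$ is abelian, $C\subseteq\mathrm{Fitt}(G)=F$, and $C=Z(F)$. With this inserted, your proof is correct, and it is then essentially the paper's proof rearranged: the paper runs the same Schur argument for the centralizer of its abelian subgroup $A$, where maximality of $A$ makes the last step immediate. In short, the idea is right, but the centralizer step you skipped is the actual heart of the argument.
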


\begin{proof}
We may clearly assume that $h>0$, for otherwise $G$ is finite, 
and hence trivial.
Since $G$ is virtually polycyclic it has a polycyclic subgroup $P$ of finite index.
The intersection $Q$ of all subgroups of $G$ of index $[G:P]$ is a characteristic polycyclic subgroup of finite index.
The lowest nontrivial term of the derived series of $Q$
is a nontrivial characteristic abelian subgroup of $G$.
Let $A$ be a characteristic abelian subgroup which is maximal among such subgroups.
Then $A$ is finitely generated, since $G$ is virtually polycyclic, 
and so its torsion subgroup is finite.
Since the torsion subgroup is characteristic in $A$ it is normal in $G$,
and so must be trivial.
Hence $A\cong\mathbb{Z}^r$ for some $1\leq{r}\leq{h}$.
(Note also that $r\leq3$, if $h=4$, by hypothesis.) 
Our argument involves a downward induction on $h$.

The quotient $G/A$ is virtually polycyclic,
and so has an unique maximal finite normal subgroup.
Let $D$ be its preimage in $G$.
Then $D$ is characteristic in $G$ and conjugation in $D$ 
induces a homomorphism from $D$ to $\Aut(A)\cong{GL(r,\mathbb{Z})}$, 
with kernel $C$ the centralizer of $A$ in $D$.
Since $C/A$ is finite the commutator subgroup $C'$ is finite,
by Schur's Theorem \cite[10.1.4]{Ro}.
But $C$ is normal in $G$ and so $C'$ must be trivial.
Hence $C$ is abelian, and so $C=A$, by maximality of $A$. 
Therefore $D/A$ maps injectively to $GL(r,\mathbb{Z})$.
The finite subgroups of $GL(r,\mathbb{Z})$ are polycyclic, if $r\leq3$,
and so $D$ is polycyclic.

If $r=h$ then $D=G$ and we are done.
If $r<h$ then we note that $G/D$ has no nontrivial finite normal subgroup 
and $h(G/D)=h-r<h$, and so $G/D$ is polycyclic, by the inductive hypothesis.
Therefore $G$ is also polycyclic.
\end{proof}

The restriction on $h$ is necessary.
The alternating group $A_5$ acts on $\mathbb{Z}^5$ by permuting the coordinates,
and this action fixes the hyperplane $\sum {x_i}=0$. 
Hence $A_5$ acts effectively on $\mathbb{Z}^4$.
The corresponding semidirect product $\mathbb{Z}^4\rtimes{A_5}$ 
is virtually abelian of rank 4, and has no nontrivial finite normal subgroup,
but is not solvable and hence not polycyclic.

We shall now prove  Theorem \ref{A}.

\begin{proof}[Proof of Theorem \ref{A}]
The assertions about $\partial M$  follow from Lemma~\ref{components of boundary}, 
Theorem~\ref{cd<n}, and the equation ~\eqref{PLD} (from the proof of Theorem \ref{cd<n}).

We need to address the possible $\pi = \pi_1M$.

Since $M$ is an aspherical 4-manifold, $\cd\pi\leq4$, 
with equality if and only if $M$ is closed.   Since $M$ is compact and aspherical, $\pi$ has type $FP$.
Since $\pi$ is elementary amenable and $\cd\pi<\infty$,
it is virtually solvable \cite{HL92}.
A virtually solvable group which is $FP$ is constructable,
and is a duality group, with Hirsch length $h(\pi)=\cd\pi$ \cite{Kr86}.

The trivial group is the only group with cohomological dimension 0.
 
If $\cd\pi=1$ then $\pi$ is a nontrivial free group,  and so $\pi\cong\Z$,
since $\pi$ is elementary amenable.

The only finitely generated solvable groups with cohomological dimension 2 
are the Baumslag-Solitar groups $BS(1,m)$ with $m\not=0$ \cite{Gi79}.
If $\pi$ is virtually finitely generated solvable and $\cd\pi=2$ then $\pi$ has
a normal solvable subgroup $K$ of finite index;  
hence $K$ is a Baumslag-Solitar group.
Hence $\chi(\pi)=[\pi:K]^{-1}\chi(K)=0$, 
and so $\beta_1(\pi)=1+\beta_2(\pi)\geq1$.
Therefore $\Hom(\pi,\Z)=H^1(\pi;\Z)\not=0$.
The kernel of an epimorphism from $\pi$ to $\Z$ is torsion-free,
and virtually abelian of rank 1.
Hence it is abelian, and so $\pi$ is solvable,
and thus is also a Baumslag-Solitar group.   

If $\cd\pi=3$, then $\pi$ is solvable \cite{Hi24}.
Hence $H^k(\pi;\Z\pi)=0$ for $k\leq2$,
since solvable groups with type $FP$ are duality groups \cite{Kr86}.
Hence, by equation \eqref{PLD}, 
$H_1(\partial \wt M)$, $H_2(\partial \wt M)$, 
and $H_3(\partial \wt M)$ all vanish,
and so each component of $\partial \wt M$ is acyclic. 
In particular,  $\kappa_i=\ker(\pi_1\inc_i)$ is perfect for all $i$.   
The vanishing of $H_3(\partial \wt M)$ implies that each  
$G_i=\mathrm{Im}(\pi_1\inc_i)$ is infinite (in the nonorientable case 
either replace $\Z$ by $\F_2$ or pass to the 2-fold orientation double cover).   
Let $G_i=\mathrm{Im}(\pi_1\inc_i)$.
Since $\partial \wt{M}_i$ is acyclic, 
the chain complex $C_*(\partial{M}_i;\mathbb{Z}[G_i])$ 
is a finite free resolution of $\mathbb{Z}$, and so $G_i$ is a $PD_3$-group, 
as in Lemma \ref{PD3 by perfect}.

Since $\pi$ is solvable,  each $G_i$ is solvable. 
Every solvable Poincar\'e duality group $G$ is polycyclic
and $h(G)=\cd G$ \cite{Bieri},
and hence each $G_i$ is a polycyclic $PD_3$-group,
of Hirsch length 3.
It follows by an induction on the Hirsch length that if
$K\leq{H}$ are finitely generated solvable groups, $K$ is polycyclic, 
$H$ is constructable, and $h(K)=h(H)$, then $[H:K]<\infty$.
Therefore $[\pi:G_i]<\infty$, and so $\pi$ is a polycyclic $PD_3$-group.   

The final case is when $M$ is a closed aspherical manifold 
with elementary amenable fundamental group.   
This case is well-known, 
and is included here only for completeness.   
Since $\pi$ is then a virtually solvable Poincar\'e duality group,
it is virtually polycyclic, and $h(\pi)=\cd\pi=4$.
If $\pi$ is not virtually abelian, then it
is polycyclic by Lemma \ref{NNTFN}.
There are just 74 torsion-free virtually abelian groups 
of Hirsch length 4 (the flat 4-manifold groups) \cite{HiF}, 
and all are polycyclic, by inspection.  
\end{proof}

It follows immediately from Poincar\'e-Lefshetz duality for $(M,\partial{M})$
that when $\pi$ is a $PD_2$-group,   $H^i(p;\Z\pi)$ is an isomorphism
for $i\leq2$ if and only if the covering space $\partial\widetilde{M}$ 
has the integral homology of $S^1$.
(This formulation is used in part (3) of \cite[Corollary 23]{dh1}.)

The corollary below strengthens  \cite[Chapter 11.\S5]{FQ},
in which its was shown that if $M$ is an compact aspherical 4-manifold 
with $\pi=\pi_1M$ polycyclic and $f:M_1\to{M}$ is a homotopy equivalence 
which restricts to a homeomorphism on the boundaries then $f$ is 
homotopic to a homeomorphism.

\begin{corollary} \label{FJC for EA in dim 4}
If $\pi$ is the fundamental group of a compact aspherical 4-manifold and $\pi$ is elementary amenable, then the Farrell-Jones Conjecture in $K$- and $L$-theory holds.  In particular, the Borel Uniqueness Conjecture for such manifolds hold: a homotopy equivalence between compact aspherical 4-manifolds with elementary amenable fundamental group which is a homeomorphism on the boundary is homotopic, relative to the boundary, to a homeomorphism.
\end{corollary}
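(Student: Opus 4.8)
The plan is to treat the two assertions in turn: first deduce the Farrell-Jones Conjecture (FJC) directly from Theorem~\ref{A}, and then feed it, together with the goodness of $\pi$, into the surgery exact sequence to obtain topological rigidity. For the FJC I would use Theorem~\ref{A} to reduce to a short list of groups: $\pi$ is trivial, infinite cyclic, a solvable Baumslag-Solitar group $BS(1,m)$, or a polycyclic $PD_3$- or $PD_4$-group. Every such group is solvable (polycyclic groups are solvable and $BS(1,m)=\Z[1/m]\rtimes\Z$ is metabelian), and is virtually poly-cyclic except possibly for the $BS(1,m)$ with $m\neq\pm1$. The FJC in $K$- and $L$-theory is a theorem for virtually poly-cyclic groups (Bartels-Farrell-L\"uck) and, more generally, for all solvable groups (Wegner), so it holds in each case. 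The substance here is not the citation but Theorem~\ref{A}, which has already confined $\pi$ to a class for which FJC is known.

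For topological rigidity I would run the topological surgery exact sequence relative to the boundary. Since $\pi$ is elementary amenable it is good, as recalled in the introduction, so the Disc Embedding Conjecture holds and both topological surgery and the $s$-cobordism theorem are available in dimension $4$ (with a $5$-dimensional cobordism). Being the fundamental group of an aspherical manifold, $\pi$ is torsion-free, so the $K$-theoretic FJC gives $\Wh(\pi)=0$ and $\wt K_0(\Z\pi)=0$; hence simple and ordinary homotopy equivalences coincide and the $s$-cobordism theorem applies. Given a homotopy equivalence $h\colon M'\to M$ restricting to a homeomorphism on the boundary, I would regard $(M',h)$ as an element of the rel-boundary topological structure set $\mathcal{S}^{TOP}_\partial(M)$, which sits in the sequence
\[
L_{5}(\Z\pi,w)\to\mathcal{S}^{TOP}_\partial(M)\to [M/\partial M,\,G/TOP]\xrightarrow{\ \sigma\ }L_{4}(\Z\pi,w),
\]
and aim to show that this structure set is a single point.

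To establish triviality I would pass to the Ranicki algebraic surgery exact sequence. Since $M$ is aspherical, $M\simeq K(\pi,1)$, and the assembly maps $H_*(M;\mathbb{L})\to L_*(\Z\pi,w)$ are isomorphisms by the FJC established above (for torsion-free $\pi$ every infinite virtually cyclic subgroup is infinite cyclic, so the Farrell-Jones isomorphism reduces to this form with no surviving Nil or UNil contributions). A diagram chase then kills the algebraic structure groups, and because surgery is licensed by goodness the geometric rel-boundary structure set is identified with these and so vanishes as well. Triviality of $\mathcal{S}^{TOP}_\partial(M)$ is exactly the statement that $h$ is homotopic, rel $\partial M$, to a homeomorphism.

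The main obstacle, and the point at which the hypotheses genuinely bite, is the dimension: in dimension $4$ neither the surgery sequence nor the $s$-cobordism theorem holds for an arbitrary fundamental group, and it is precisely the goodness of elementary amenable groups (via the DEC) that licenses their use. A secondary technical point is the relative bookkeeping: the boundary $N$ need not be aspherical, so one must run the argument for the pair $(M,\partial M)$ and use the long exact sequence of the pair, exploiting that $h$ is already a homeomorphism on $\partial M'$ to control the boundary terms. Once surgery is available and FJC supplies the isomorphism of assembly maps, what remains is the standard topological-rigidity diagram chase.
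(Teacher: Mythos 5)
Your proposal is correct and follows essentially the same route as the paper: Theorem~\ref{A} confines $\pi$ to polycyclic groups or solvable Baumslag--Solitar groups, for which the Farrell--Jones Conjecture is quoted from the literature (the paper cites Farrell--Hsiang, Farrell--Jones, and Farrell--Wu, where you invoke the more uniform Bartels--Farrell--L\"uck/Wegner theorems covering all solvable groups at once), and the Borel Uniqueness Conjecture is then deduced as a formal consequence of goodness together with FJC. The rel-boundary surgery argument you sketch (vanishing of $\Wh(\pi)$ and $\wt{K}_0(\Z\pi)$, assembly isomorphisms with no Nil/UNil terms for torsion-free $\pi$, triviality of the structure set) is precisely the content the paper delegates to its companion paper \cite{dh1}.
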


\begin{proof}
By Theorem \ref{A}, $\pi$ is a polycyclic or a solvable Baumslag-Solitar group as well as being finitely generated torsion-free.   The FJC for such polycyclic groups is proven in $K$-theory in \cite{FH81} and in $L$-theory in \cite{FJ88}.  The FJC  in $K$ and $L$-theory is proven for solvable Baumslag-Solitar group groups in \cite{FW14}. (See also \cite{St87} for the $L$-theory case.)

The Borel Uniqueness Conjecture is a formal consequence of the fact that $\pi$ is good and satisfies the Farrell-Jones Conjectures  (see \cite{dh1}).
\end{proof}

\begin{corollary}
\label{cd2kernel}
If $\cd \pi=2$ then $\kappa=\ker(p)$ has abelianization
$\kappa^{ab}=H_1(\kappa)\cong\overline{H^2(\pi;\Z\pi)}$, and so is nontrivial.
\end{corollary}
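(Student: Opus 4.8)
The plan is to read the claimed isomorphism directly off equation~\eqref{PLD} of Theorem~\ref{cd<n} once the relevant boundary homology is identified with $H_1(\kappa)$. In the situation at hand $\pi=\pi_1M$ for a compact aspherical $4$-manifold $M$, $p=\pi_1\inc$, and $\kappa=\ker(p)$. Since $n=4$ and $\cd\pi=2=n-2$, Lemma~\ref{components of boundary}(2) shows that $\partial M$ is connected and Theorem~\ref{cd<n}(2) shows that $p$ is an epimorphism. Hence the preimage $\partial\wt M$ of $\partial M$ in the universal cover $\wt M$ is connected, and it is precisely the cover of the connected $3$-manifold $\partial M$ corresponding to the subgroup $\kappa$. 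Thus $\pi_1(\partial\wt M)\cong\kappa$, and therefore $H_1(\partial\wt M)\cong\kappa^{ab}=H_1(\kappa)$.

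With this identification in place I would specialize the Poincar\'e--Lefschetz isomorphism $\ol{H^k(\pi;\Z\pi)}\cong\wt H_{n-k-1}(\partial\wt M)$ of~\eqref{PLD} to $n=4$ and $k=2$. This yields $\ol{H^2(\pi;\Z\pi)}\cong\wt H_1(\partial\wt M)=H_1(\partial\wt M)\cong H_1(\kappa)$, which is exactly the claimed isomorphism $\kappa^{ab}=H_1(\kappa)\cong\ol{H^2(\pi;\Z\pi)}$.

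For the final assertion it suffices to show $H^2(\pi;\Z\pi)\neq0$, as passing to the conjugate module preserves vanishing. Here I would invoke the structural facts from \S1: being a solvable group of type $FP$ with $\cd\pi=2$, the group $\pi$ is constructable and a duality group of dimension $2$ by \cite{Kr86} (equivalently, by Theorem~\ref{A}, $\pi\cong BS(1,m)$), and a duality group of dimension $2$ has $H^2(\pi;\Z\pi)\neq0$ by definition. Hence $\ol{H^2(\pi;\Z\pi)}\neq0$ and $H_1(\kappa)\neq0$, so $\kappa$ is nontrivial. I expect the only genuinely non-formal point to be this nontriviality, which relies on knowing $\pi$ is a duality group; once that is granted, the isomorphism and the conclusion follow immediately from the results already established.
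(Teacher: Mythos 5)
Your proposal is correct and takes essentially the same route as the paper: both arguments identify $\kappa^{ab}$ with $H_1(\partial\wt M)$ (using connectedness of $\partial M$ and surjectivity of $p$) and then apply Poincar\'e--Lefschetz duality together with the long exact sequence of the pair $(\wt M,\partial\wt M)$, which is exactly the content of equation~\eqref{PLD}. The only divergence is the final nontriviality step, where you invoke solvability and the duality-group property via \cite{Kr86} (hence the elementary amenable hypothesis), whereas the paper needs only the standard fact that a group $\pi$ of type $FP$ with $\cd\pi=n$ has $H^n(\pi;\Z\pi)\neq0$, so its version of the corollary does not depend on solvability at all.
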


\begin{proof}
The exact sequence of homology and Poincar\'e duality give
\[\kappa^{ab}=H_1(\partial\widetilde{M})\cong
{H_2(\widetilde{M},\partial\widetilde{M})}
\cong\overline{H^2(M;\mathbb{Z}\pi)}=\overline{H^2(\pi;\mathbb{Z}\pi)}.
\]
Since $\cd\pi=2$ this is nonzero, and so $\kappa$ is nontrivial.\end{proof}

Thus such a group $\pi$ cannot be realizable by an aspherical manifold with $\pi_1$-injective boundary.

The polycyclic groups which are fundamental groups of closed 4-manifolds
are essentially known, 
in so far as the classification can be largely reduced to
questions of conjugacy in $GL(3,\mathbb{Z})$ (and related groups) 
See \cite[Chapter 8]{HiF}.

Elementary amenable groups of cohomological dimension 3 are solvable, 
and the finitely presentable examples are constructable and of 
 type $F$ \cite{Hi24}.
Theorem \ref{A} shows that, for example, 
$\Z \times BS(1,2)$ is not the fundamental group of a compact aspherical 4-manifold,
although it is a duality group of type $F$ and has cohomological dimension 3.
 
If we drop the condition that $\pi$ be elementary amenable then 
there are similar conclusions regarding $\partial{M}$
and $\pi_1\inc$ when $\cd \pi\not=3$.
However, when $\cd\pi=3$ we can no longer expect $\pi$ to be 
a $PD_3$-group or $\partial{M}$ to have at most two components.
For example, if $r>1$ then $\Z^2\times{F(r)}$ is not a $PD_3$-group,
but is the fundamental group of an aspherical 4-manifold with 
$k+1$ boundary components, for any $k\leq{r}$.
If $M=N \times{S^1}$ with $N = (T^2\times{I})\natural(T^2\times{I})$,
then $\partial{M}$ has 3 components and
$\pi\cong(\mathbb{Z}^2*\mathbb{Z}^2)\times\mathbb{Z}$ is not even a duality group, 
as $H^i(\pi;\mathbb{Z}\pi)$ is nonzero for $i=2,3$.

The following list of simple examples of aspherical 4-manifolds 
with nonempty boundary includes examples representing each
polycyclic group of cohomological dimension $\leq3$.
(It also includes examples whose groups have noncyclic free subgroups, 
which are not known to be good groups at the present time.)

\begin{enumerate}
 \setcounter{enumi}{-1}
\item$\pi=1$: let $M=D^4$, with $\partial{M}=S^3$;
\item$\pi=F(r)$: let $M=\natural^rS^1\times{D^3}$, with $\partial{M}=\#^rS^1\times{S^2}$;
\item$\pi=\pi_1S_g$, $S_g$ a closed surface of genus $g\geq1$: let $M$ be the total space
of a $D^2$-bundle over $S_g$, with $\partial{M}$ the associated $S^1$-bundle;
\item$\pi=\pi_1N$, $N$ an aspherical closed 3-manifold: 
let $M=M(\eta)$ be the total space of the $I$-bundle over $N$ induced by 
$\eta\in{H^1(\pi;\F_2)}$.
Then $\partial{M}=N\times\{0,1\}$ if the bundle is trivial and
$\partial{M}$ is a connected 2-fold covering space of $N$ otherwise.
\end{enumerate}

The possibilities with $\pi$ abelian are represented by the products 
$D^{4-k}\times{T^k}$, for $0\leq{k}\leq4$, 
where $T^k=\mathbb{R}^k/\mathbb{Z}^k$ is the $k$-torus.

The remaining groups allowed by Theorem \ref{A}
are the solvable Baumslag-Solitar groups.   One way to construct aspherical 4-manifolds with BS-fundamental group is as
the union of a 0-handle, two 1-handles, and a 2-handle, based on the one-relator presentation given in \S1.  
Equivalently, on can draw Kirby calculus diagrams 
with a 3-component link having two dotted components 
representing the generators,
and the third component (representing the relator) with integral framing.
Since the 2-complexes corresponding to these presentations are aspherical, 
so are the resulting 4-manifolds.

When $\pi=BS(1,1)=\mathbb{Z}^2$
we may take the link to be the Borromean rings $Bo$.
Varying the framing on the third component gives the 
different total spaces $E_n(\mathbb{Z}^2,1)$  
of orientable $D^2$-bundles over the torus.

Note that  every group $\pi$ listed in Theorem \ref{A} is the fundamental group of a {\em smooth} compact aspherical 4-manifold.

The first two examples in the above list are essentially the only ones 
with $\cd\pi<3$ and $\pi_1$-injective boundary.
For then $\partial{M}$ is connected and $\pi_1\inc$ is an isomorphism, 
by Corollary \ref{pi-inc inj}.
Hence $\pi$ is free of finite rank $r\geq0$, 
since $\pi\cong\pi_1\partial{M}$ and $\cd\pi<3$.
(More generally, if $M$ is a compact 4-manifold with connected, nonempty boundary and $\pi_1\inc$ is an isomorphism then $\pi$ must be free
\cite{Da94}.)
Hence $M$ is orientable if and only if $\partial{M}$ is orientable, 
and $(M,\partial{M})\simeq\natural^r(S^1\times{D^3},S^1\times{S^2})$ 
or $\natural^r(S^1\tilde\times{D^3},S^1\tilde\times{S^2})$,
accordingly. 

The examples with $\cd\pi=3$ include all the possibilities 
with $\pi$ a $PD_3$-group and all the $\pi_1\inc_i$ injective.
For then $\pi_1\partial_i{M}$ is also a $PD_3$-group,
since it has finite index in $\pi$, by Theorem \ref{cd<n}.
Hence $\partial_i{M}$ is aspherical.
It then follows from Mostow Rigidity and the Geometrization Theorem 
(via \cite{MS86,Zi82}) that $\pi$ is also a 3-manifold group.

We shall examine the role of the boundary and peripheral system 
in determining $M$ in \S6 below.

\section{existence: realization of boundaries}

It is an immediate consequence of Theorem \ref{PD4pair} that every closed 3-manifold $N$, 
homomorphism from $\pi_1N$ to $\pi$,
and orientation character $w:\pi\to\mathbb{Z}^\times$
compatible with Theorem \ref{A} can be realized by an aspherical $PD_4$-pair $(X,N)$,
since homomorphisms  from $\pi_1N$ to $\pi$ correspond to maps from $N$ to $K(\pi,1)$.
Theorem B then follows from this result and the Borel Existence Theorem for pairs,
which we state here (as specialised to the 4-dimensional,  $\pi$ elementary amenable
case) for convenience:

\smallskip
\noindent
{\bf Borel Existence Theorem for pairs}
\cite[Theorem C]{dh1}.
{\it Let $(X,Y)$ be a Poincar\'e pair of dimension $4$ with $X$ aspherical 
with fundamental group $\pi$ and with $Y$ a nonempty closed $3$-manifold.
If $\pi$ is elementary amenable then there is a $4$-manifold $M$ and 
a homotopy equivalence $f:(M,\partial{M})\to(X,Y)$ which restricts to a homeomorphism of boundaries.}
 
\smallskip
We shall comment briefly on simple examples.

The case when $\pi=1$ is well understood.
The manifold $M$ must be contractible, 
and taking the boundary gives a bijective correspondence between compact
contractible topological 4-manifolds and homology 3-spheres 
\cite[Corollary 9.3C and Proposition 11.6A]{FQ}. 
The simplest nontrivial smooth examples of this type are the Mazur 4-manifolds, 
formed by adding a 2-handle to $D^3\times{S^1}$ with attaching circle homologous 
to the $S^1$ factor on the boundary.

More generally, if $M$ is a 4-manifold with nonempty boundary then
taking boundary connected sum with a contractible 4-manifold does not change $\pi$ 
(or the homotopy type of $M$)
 but changes $\partial{M}$ by connected sum with a homology 3-sphere.
(Conversely, every such $M$ with $\pi$ elementary amenable and 
$\partial{M}\not=\emptyset$ is a boundary connected sum $M_0\natural{C}$, 
where $C$ is contractible and $\partial{M_0}$ is prime,
by Corollary \ref{prime bdry} below.)

When $\pi\cong\Z$ the realization is essentially due to 
Freedman and Quinn  \cite[Proposition 11.6A]{FQ}.
The simplest nontrivial smooth example of this type is perhaps the 
exterior $X(\Delta)$ of the slice disc for the Kinoshita-Terasaka 11-crossing knot 
$11_{n42}$ deriving from a standard ribbon disc, as in \cite[Figure 1.4]{HiA}.
The exterior  is homotopy equivalent to $S^1$, 
but $\partial{X(\Delta)}$ is aspherical, and so is not $S^2\times{S^1}$.

We may construct nontrivial examples of 4-manifolds with $\pi\cong\Z^2$ as follows.
The Kirby diagram given by the Borromean rings $Bo$
with two dotted components and one 0-framed component 
is a presentation for $(D^2\times{T^2},T^3)$.
If we tie Alexander polynomial-1 knots in one or more of the components
of $Bo$ we obtain a Kirby diagram for a 4-manifold $M\simeq{T^2}$,
and with $\partial{M}$ having the $\mathbb{Z}[\mathbb{Z}^2]$-homology of  $S^1\times{T^2}$. 
(Other integer framings of the third component give 4-manifolds with boundaries 
$\mathbb{Z}[\mathbb{Z}^2]$-homology equivalent to $S^1$-bundles over $T^2$.)

We shall give a related construction of $\mathbb{Z}[\mathbb{Z}^3]$-homology $T^3$s in Example  \ref{4d_he_with_homeo_b} below.

\section{uniqueness: the role of the peripheral system} \label{section:uniqueness}

In this section we shall prove Theorem \ref{C}, 
which asserts that an enhanced form of the peripheral system is 
a complete invariant for the homeomorphism type of 
a compact aspherical 4-manifold with elementary amenable fundamental group and orientable boundary.  We also study the geometric topological properties of the boundary; the algebraic topological properties of the boundary were completely determined by Theorems \ref{A} and \ref{B}.

We begin by defining the notion of enhanced peripheral system.
If $M$ has orientable boundary and $N$ is a component of $\partial{M}$,
let $[N]$ be the image of a fundamental class $[M]$
in $H_3(\pi_1N;\Z)$ under the composition of the connecting homomorphism
from $H_4(M, \partial M;\Z^w)\to H_3(\partial M;\Z)$, the projection onto the summand
$H_3(N;\Z)$, and the homomorphism to $H_3(\pi_1N;\Z)$ induced by the classifying map.
The {\it enhanced\/} peripheral system of a compact 4-manifold $M$ 
with orientable boundary is the peripheral system together with the  classes $[N]$.
Two such enhanced peripheral systems are equivalent if there is an isomorphism of peripheral systems which preserves each of the homology classes $[N]$, 
up to a simultaneous change of signs.
(The enhancement is used to identify $\partial{M}$ among all 3-manifolds
with given fundamental group.
It is redundant if $\partial{M}$ is a prime 3-manifold;
in particular, if $\pi$ is elementary amenable and the boundary is $\pi_1$-injective.)
We shall give an example after the proof of Theorem \ref{C} to 
show that enhancement is necessary in general.

The infinite dihedral group $D_\infty = \Z \rtimes_{-1} \Z/2$ is isomorphic to the free product $\Z/2 * \Z/2$.

\begin{lemma}
\label{free product}
Let $\nu$ be a group which has a perfect normal subgroup $\kappa$
and solvable quotient $\nu/\kappa$.
If $\nu ={G*H}$, then either $G$ or $H$ is perfect, or $\nu/\kappa$ is the infinite dihedral group.

\end{lemma}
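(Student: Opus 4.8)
The plan is to show that the quotient $\nu/\kappa$ is itself a free product, namely $\nu/\kappa\cong\ol{G}*\ol{H}$, where $\ol{G}$ and $\ol{H}$ denote the images of $G$ and $H$ in $\nu/\kappa$, and then to invoke the fact that a free product of two nontrivial groups is solvable only when both factors have order two. Write $\ol{G}=G/(G\cap\kappa)$ and $\ol{H}=H/(H\cap\kappa)$; since $\kappa$ is normal, $G\cap\kappa\triangleleft G$ and $H\cap\kappa\triangleleft H$, and the images of $G$ and $H$ in $\nu/\kappa$ are exactly these quotients.

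The key step, and the point where the hypotheses on $\kappa$ are used, is the identification $\nu/\kappa\cong\ol{G}*\ol{H}$. I would obtain this from the Kurosh subgroup theorem applied to $\kappa\leq G*H$, which gives a decomposition $\kappa=F*\big(\ast_\alpha\,\kappa\cap G^{x_\alpha}\big)*\big(\ast_\beta\,\kappa\cap H^{y_\beta}\big)$ with $F$ free. Because $\kappa$ is normal, each factor $\kappa\cap G^{x_\alpha}$ equals the conjugate $(\kappa\cap G)^{x_\alpha}$, and similarly for $H$. Taking abelianizations and using that the abelianization of a free product is the direct sum of the abelianizations of its factors, the hypothesis that $\kappa$ is perfect forces $F=1$ and forces each of $\kappa\cap G$ and $\kappa\cap H$ to be perfect. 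With $F=1$, the Kurosh decomposition exhibits $\kappa$ as generated by $\nu$-conjugates of $\kappa\cap G$ and $\kappa\cap H$, so $\kappa$ is precisely the normal closure in $\nu$ of $(\kappa\cap G)\cup(\kappa\cap H)$. Since factoring a free product by the normal closure of normal subgroups of the factors returns the free product of the quotients, this yields $\nu/\kappa\cong\ol{G}*\ol{H}$.

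It then remains to exploit that $\nu/\kappa$ is solvable. A free product of two nontrivial groups either is infinite dihedral, precisely when both factors have order two, or contains a nonabelian free subgroup and hence is non-solvable in every other case. Applying this to $\ol{G}*\ol{H}$: if both $\ol{G}$ and $\ol{H}$ are nontrivial then $\ol{G}\cong\ol{H}\cong\Z/2$ and $\nu/\kappa\cong D_\infty$; otherwise one factor is trivial, say $\ol{H}=1$, which means $H=H\cap\kappa$, and we already know $H\cap\kappa$ is perfect, so $H$ is perfect. Symmetrically, $\ol{G}=1$ gives $G$ perfect. This is exactly the asserted trichotomy.

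The main obstacle is the identification $\nu/\kappa\cong\ol{G}*\ol{H}$: without perfectness there would be a surviving free Kurosh factor $F$, and $\nu/\kappa$ would not in general split as the free product of the images of $G$ and $H$. The perfectness of $\kappa$ is precisely what kills $F$ and pins $\kappa$ down as the normal closure of its intersections with the two factors; once that is in hand, the conclusion is just the standard dichotomy for solvable free products.
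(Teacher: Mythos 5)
Your proof is correct, but it takes a genuinely different route from the paper's. The paper never invokes the Kurosh subgroup theorem; instead it uses only the fact that $\ker(A*B\to A\times B)$ is free (cited from \cite[Proposition 1.4.6]{DD89}), together with a rank estimate, applied to solvable quotients $\ol G*\ol H$ of $\nu$ --- first with $\ol G=G^{ab}$, $\ol H=H^{ab}$, then with the metabelianizations $G/[[G,G],[G,G]]$, $H/[[H,H],[H,H]]$. This iterated-quotient trick shows that if neither $G$ nor $H$ is perfect, then $[G,G]$ and $[H,H]$ are perfect of index $2$; the paper then defines $\varphi:G*H\to G/[G,G]*H/[H,H]\cong D_\infty$ explicitly and identifies $\ker\varphi=\kappa$ by a two-sided inclusion (perfectness of $\ker\varphi$ gives $\ker\varphi\leq\kappa$; metabelianness of $D_\infty$ gives $\kappa\leq\ker\varphi$). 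You instead apply Kurosh directly to the normal subgroup $\kappa$ itself: perfectness of $\kappa$ kills the free Kurosh factor and forces $\kappa\cap G$ and $\kappa\cap H$ to be perfect, so $\kappa$ is the normal closure of $(\kappa\cap G)\cup(\kappa\cap H)$ and hence $\nu/\kappa\cong G/(\kappa\cap G)*H/(\kappa\cap H)$, after which the standard dichotomy for solvable free products finishes the argument. Your route uses a heavier tool but is more structural and yields strictly more: it identifies $\nu/\kappa$ as a free product of the images of $G$ and $H$ unconditionally, shows $\kappa\cap G$ and $\kappa\cap H$ are perfect, and sidesteps both the paper's iterated passage to quotients and its slightly delicate appeal to stabilization of the derived series of $\nu$ (note $\nu$ itself need not be solvable, so "finite length" there must be read as stabilization at a perfect subgroup). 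The paper's route, by contrast, gets by with a much lighter black box and produces the isomorphism to $D_\infty$ by an explicit homomorphism.
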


\begin{proof} Assume that $\kappa$ is a perfect normal subgroup of $\nu = G * H$,  that $G$ and $H$ are not perfect, and that $\nu/\kappa$ is solvable.   We want to show that $\nu/\kappa$ is infinite dihedral.

We are going to use the fact \cite[Proposition 1.4.6]{DD89} that  for any groups $A$ and $B$,  $\ker(A * B \to A \times B)$ is a  free group.   If $A$ and $B$ have order 2  the kernel has rank 1, but if $A$ and $B$ are both nontrivial and $A$ has order greater than 2, then the kernel is nonabelian ($ab^{-1}a^{-1}b^{-1}$ and $a'b^{-1}a'^{-1}b^{-1}$ don't commute if $a,a'$ are distinct and not  1 and $b$ is not  1), hence has rank greater than one.

The length of the derived series of $\nu$ is finite by hypothesis, thus so is the length of any quotient $\rho$ of $\nu$.   Let $\ol G$ and $\ol H$ be nontrivial solvable quotients of $G$ and $H$.    Since the length of the derived series of  $\rho = \ol G *\ol H$ is finite, the rank of the free group  $\ker(\ol G * \ol H \to \ol G \times \ol H)$ must be 1, which implies that  $|\ol G| =2$ and $|\ol H| = 2$.  Applying this fact when $\ol G = G/[G,G]$ and $\ol H = H/[H,H]$ (which are nontrivial due to the assumption that $G$ and $H$ are not perfect), we see that the commutator subgroups have index 2, and then applying this fact again when $\ol G = G/[[G,G],[G,G]]$ and  $\ol H = H/[[H,H],[H,H]]$ we see that the commutator subgroups are perfect and index 2.

Let $P = [G,G]$ and $Q = [H,H]$ be the perfect index two subgroups of $G$ and $H$ respectively.   Let  $\varphi : G * H \to G/P *H/Q \cong D^\infty$ be the free product of the epimorphisms.   The kernel of $\varphi$ is generated by conjugates of elements of $P$ and of elements of $Q$, and hence is perfect.   Since $\nu/\kappa$  is solvable,  $\ker \varphi \subset  \kappa$.  But $\kappa = [[\kappa,\kappa],[\kappa,\kappa]] \subset \ker \varphi$, since $[[D^\infty,D^\infty],[D^\infty,D^\infty]] = 1$.  Thus there is an induced isomorphism $\nu/\kappa \xrightarrow{\cong} D^\infty$.
\end{proof}

We need some results from 3-manifold topology.  
A connected closed 3-manifold is {\em prime} if it cannot be expressed as a nontrivial connected sum.   
A prime manifold is either aspherical, or contains a 2-sided projective plane, 
is diffeomorphic to $S^1 \times S^2$ or $S^1 ~\tilde \times~ S^2$, 
or has finite fundamental group. 
If a 3-manifold $N$ contains a 2-sided projective plane $\mathbb{RP}^2$ then the orientation character
$w_1N$ is nontrivial on the image of $\pi_1\mathbb{RP}^2$, and thus $\pi_1N  $ has 2-torsion.
Every connected closed 3-manifold $N$ has a decomposition as a finite connected sum 
of prime 3-manifolds.
If $N\not\cong{S^3}$ then we may assume that none of the summands are $S^3$,
and if moreover $N$ is orientable the decomposition is then essentially unique.
If $N$ is a connected closed 3-manifold with $\pi_1N = G_1 * G_2$, 
then $N = N_1 \# N_2$ with $\pi_1N_i = G_i$.    
We also need two consequences of Perelman's Geometrization Theorem: 
the Poincar\'e Conjecture and  the fact that the Poincar\'e homology sphere $S^3/I^*$ 
is the only homology 3-sphere with nontrivial finite fundamental group.

\begin{lemma}
\label{ZHS summand}
Let $M$ be a compact aspherical $4$-manifold, and let $N$ be a component of $\partial{M}$.
\begin{enumerate}
\item $N$ has no $2$-sided projective planes.
\item Any summand of $N$ with finite fundamental group is homeomorphic to $S^3$ or $S^3/I^*$.
\end{enumerate}
Assume now that  $\pi=\pi_1M$ is elementary amenable.
 \begin{enumerate}
\addtocounter{enumi}{2}
\item 
If $\nu=\pi_1N = G * H$, then $G$ or $H$ is perfect.
\item $N$ is a connected sum $N_0\#\Sigma_N$, 
where $N_0$ is a prime manifold and is either aspherical or is $S^1 \times S^2$ 
or is $S^1~ \tilde \times ~S^2$ or is $S^3$
and $\Sigma_N$ is a homology $3$-sphere.
\end{enumerate}
\end{lemma}

\begin{proof} 
(1)
Suppose that $N$ contains a 2-sided projective plane $P$.
Then $P$ would have a product neighbourhood in $M$ and so $w_1P=w|_P$,
where $w=w_1M$.
But then $\pi_1P$ would map injectively to $\pi$, 
contradicting the fact that $\pi$ is torsion-free.

(2)
Suppose that $N = N_1 \# N_2$  and that $\pi_1N_1$ is finite.  
Thus $\nu=\pi_1N\cong\pi_1N_1 * \pi_1N_2$.  
There is an exact sequence
\begin{equation*}
1 \to \kappa \to \nu \xrightarrow{\pi_1(\inc)} \pi
\end{equation*}
where $\kappa := \ker (\pi_1(\inc))$.
We claim that $H_1\kappa = \kappa^{ab}$ is $\Z$-torsion-free.   Indeed
\begin{equation*}
H_1\kappa = H_1(N;\Z\pi) \subset H_1(\partial M; \Z\pi) \xleftarrow{\cong} H_2(M,\partial M; \Z\pi) \cong \ol{H^2(\pi;\Z\pi)}
\end{equation*}
is torsion-free, since the second cohomology group is torsion-free for any finitely presented group (see  \cite[Proposition 13.7.1]{Ge}). 

Since $\pi$ is torsion-free,  $\pi_1N_1 \subset \kappa $.
Thus $H_1N_1 \to H_1\kappa \to H_1N$ is the zero map, 
since the left group is finite and the middle group is torsion-free.   
But $H_1N = H_1N_1 \oplus H_1N_2$.   Thus $H_1N_1 = 0$.

Assume now that $\pi$ is elementary amenable.
(3)  If $\cd \pi = 0, 1,$ or $3$, then $H_1\kappa \cong  \ol{H^2(\pi;\Z\pi)} = 0$, so $\kappa$ is perfect.   Theorem \ref{A} shows that $\pi$ is solvable, and $\pi$ is torsion-free since $M$ is finite-dimensional and aspherical.   The result then follows from Lemma \ref{free product}.

When $\cd\pi=2$,  we must extend our strategy.  
Recall that $N = \partial M$ (Lemma \ref{cdpin-2}) and that $\pi_1(\inc)$ is onto (Theorem \ref{cd<n}). 
Note that $\pi = BS(1,m) = \Z[1/m] \rtimes \Z$ has a composition series with two torsion-free abelian factors.
Since $\nu/[\kappa,\kappa]$ is an extension of $\pi$ by a torsion-free abelian group  $\kappa^{ab} \cong   \ol{H^2(\pi;\Z\pi)}$, it follows that 
 $\nu/[\kappa,\kappa]$ has a composition series with three torsion-free abelian factors.
Thus any homomorphism from a group with finite abelianization to
$\nu$ must have image in $[\kappa,\kappa]\leq[\nu,\nu]$. 
It follows that if $\nu = {G*H}$ and $\beta_1(H)=0$, then $H$ is perfect.
(Here $\beta_1(H)=\rank H_1(H)$.)

Suppose first that $M$ is orientable.
If $m\not=1$, then $H_1(\pi)\cong \Z$ and $H^2(\pi)\cong\mathbb{Z}/(m-1)$,
so $\beta_1(\nu)=1$, by the long exact sequence for $(M,N)$.
Thus if $\nu = {G}*{H}$ with $\beta_1(G)\geq\beta_1(H)$ then $\beta_1(H)=0$, and so $H$ is perfect.
If $m=1$, so $\pi\cong\mathbb{Z}^2$, then $\beta_1(\nu)=2$ or 3,
and $H^2(\pi;\Z\pi)\cong\mathbb{Z}$.
If $\nu = {G}*{H}$ with $\beta_1(G)=\beta_1(H)=1$ then the epimorphism from $\nu$ to $\pi$ factors through an epimorphism to $F(2)$.
But then $H_1(\nu;\Z\pi)$ maps onto $H_1(F(2);\Z\pi)$,
which has infinite rank as an abelian group.
Thus if $\beta_1(\nu)=2$ and $\beta_1(G)\geq\beta_1(H)$ then 
$H$ must be perfect.
A similar argument applies if $\beta_1(\nu)=3$.

If $M$ is nonorientable and $\nu=\pi_1N = {G*H}$ 
with $\beta_1(G)\geq \beta_1(H)>0$, then the orientable double cover $M^+$
has $\nu^+=\pi_1\partial{M^+} = {G_1*H_1}$ with $\beta_1(G_1),\beta_1(H_1)>0$ (use the Kurosh's subgroup theorem and a transfer argument).
But we have shown that this never occurs in the orientable case.   Thus $\beta_1(H) = 0$ and $H$ is perfect.

(4) If $N$ is a homology sphere we are done by (3).   Otherwise 
we may express $N = N_0 \# N_1$ where $N_0$ is a prime manifold which is not an homology sphere.    Then $N_1$ is a homology sphere by part (3).
By part (2), the prime manifold $N_0$ does not have finite fundamental group, 
and so it must be aspherical or $S^1 \times S^2$ or  $S^1~ \tilde \times ~S^2$.
\end{proof}


It follows immediately from Theorem \ref{cd<n} that $w_1M$ is determined by $w_1\partial{M}$ if $\cd\pi\leq2$, or by $w_1\pi$ and the homomorphism
$\pi_1\partial{M}\to\pi_1M$,  if $\pi$ is a $PD_3$-group.
If $N$ is a boundary component of $M$ then there is a natural isomorphism 
$H^1(N;\mathbb{F}_2)=H^1(N_0;\mathbb{F}_2)$, 
since $\Sigma_N$ is a homology 3-sphere,
and $w_1N=w_1N_0$.
If $N_0$ is aspherical or $S^3$ then $w_1N_0$ is determined by 
$\pi_1N_0$.
Hence the orientation characters are determined by the other data of the peripheral system {\it except\/} when $\pi=\mathbb{Z}$ and 
$\pi_1N=\mathbb{Z}*P$, with $P$ perfect.
There are then {\it two\/} possible orientation characters.
These are realized by $M=(D^3\times{S^1})\natural{C}$ or
 $(D^3~\tilde\times~{S^1})\natural{C}$, where $C$ is a compact contractible 4-manifold with $\pi_1\partial{C}=P$.
 
We shall now prove Theorem \ref{C}.

\begin{proof} It is clear that if $M$ and $\wh M$ are homeomorphic then their enhanced peripheral systems are equivalent.

Assume now that $M$ and $\wh M$  are compact aspherical 4-manifolds with elementary amenable fundamental groups and orientable boundaries $N$ and $\wh N$ and that their enhanced peripheral systems are equivalent.   
Corollary \ref{FJC for EA in dim 4}
says that the Borel Uniqueness Conjecture holds for $M$ and $\wh M$, so it suffices to show there is a homeomorphism $h : N \to \wh N$ which extends to a homotopy equivalence $M \to \wh M$.   This is automatic if the boundary is empty by asphericity, so we assume that $\cd \pi <4$.

We assume first that $\cd \pi <3$, in which case $N$ and $\wh N$ are
connected by Lemma \ref{components of boundary} and  $\pi_1\! \inc : \pi_1 N \to \pi_1 M$ and   $\pi_1 \wh \inc : \pi_1 \wh N \to \pi_1 \wh M$ are epimorphisms by Theorem \ref{cd<n}.  The   peripheral system assumption is then that there are isomorphisms 
$\theta : \pi_1 M \xrightarrow{\cong} \pi_1 \wh M$ and $\varphi : \pi_1 N \xrightarrow{\cong} \pi_1 \wh N$ so that 
$\theta \circ  \pi_1\!\inc=\pi_1\wh \inc  \circ \varphi$.  (Since $\pi_1\!\inc$ and $\pi_1\wh \inc$ are epimorphisms, 
the conjugation in the definition of peripheral system can be absorbed into 
the isomorphism $\theta$.)  The enhanced assumption is that there are orientations $[M]$ and  $[M']$ so that $\theta_*[M] = [M']$ 
and $\varphi_*(H_3(c)[N]) = H_3(\wh c)[\wh N] \in H_3(\pi_1 \wh N)$ for the induced  orientation on the boundaries, 
where  $c : N \to K(\pi_1N,1)$ and $\wh c : \wh N \to K(\pi_1\wh N,1)$ are maps inducing the identity on the fundamental groups.

Since $N$ and $\wh N$ are connected and $\pi_1\!\inc$ and $\pi_1\wh\inc$ are epimorphisms,  elementary obstruction theory (see Proposition 9 of \cite{dh1}) and the Borel Uniqueness Conjecture says that it suffices to find a homeomorphism $h :N \xrightarrow{\cong} \ \wh N$ and an isomorphism $\theta : \pi_1 M \xrightarrow{\cong} \pi_1 \wh M$  so that $\theta \circ  \pi_1\!\inc=\pi_1\wh \inc  \circ \pi_1 h$.

So our task is to construct this homeomorphism $h$.  The key fact is that an isomorphism between
the fundamental groups of two closed orientable prime 3-manifolds which are not lens spaces can be realized by a homeomorphism \cite[Theorem 1.4.3]{BBBMP}.   This is a culmination of 
deep work in 3-manifold topology, with the last step being the geometrization theorem of Perelman.

  But lens spaces are not prime summands of $N$ and $\wh N$ by part (2) of Lemma \ref{ZHS summand}.   Thus the prime summands of $N$ and $\wh N$ are homeomorphic. But the construction of  $N$ and $\wh{N}$ from their summands involves 
choices of discs to form the connected sum, 
and it is at this point that we need to keep track of orientations.   This is where the enhancement is needed.  

Choose prime decompositions   $N=\#_{i=0}^rN_i$ and $\wh{N}=\#_{j=0}^r\wh{N}_j$  with  $$\varphi =  \varphi_0 * \cdots *  \varphi_r  : \pi_1N_0  * \cdots *  \pi_1N_r \xrightarrow{\cong}  \pi_1\wh N_0  * \cdots *  \pi_1 \wh N_r.$$
Note that all summands are oriented.   We now choose homeomorphism $h_i : N_i \to \wh N_i$ which induce $\varphi_i$.   We would like these homeomorphisms to be orientation preserving.   If $N_i$ is aspherical or the Poincar\'e homology sphere, then the classifying map  induces an isomorphism $H_3(N_i) \xrightarrow{\cong} H_3(\pi_1N_i)$, so the enhancement assumption guarantees that $h_i$ is orientation preserving.   Likewise, we may assume that $h_i$ preserves orientation on the $S^2 \times S^1$ summands, by composing, if necessary, with a orientation reversing self homeomorphism which induces the identity on the fundamental group.   Then $h = h_0 \# \cdots \# h_r$ is our desired homeomorphism.

A similar argument applies to each component if $\pi$ is a $PD_3$-group 
and $\partial{M}$ and $\partial\widehat{M}$ each have two components.
In this case the homomorphisms $\pi_1\inc_i$ are epimorphisms.

Finally, if $\pi$ is a $PD_3$-group and $N$ and $\wh{N}$ are connected
then $M$ is nonorientable, and $\pi_1\!\inc$ and $\pi_1\wh\inc$  have images
$\ker w_1M$ and $\ker w_1\wh{M}$, respectively.
A similar argument applies also in this situation.
\end{proof}

The necessity of some level of enhancement is clear from the following example.
Let $C$ be a contractible 4-manifold with boundary
$P=S^3/I^*$, and fix an orientation for $C$.
Since self-homotopy equivalences of $P$ are orientation-preserving,
$P\#{P}$ is not homeomorphic to $P\#-\!P$.
Hence the two boundary connected sums $M_+=C\natural{C}$ and 
$M_-=C\natural-\!C$ are not homeomorphic, 
although their peripheral systems are both $\{I^**I^*\to1\}$.
In this case the enhanced peripheral systems are not equivalent.

The hypothesis that $\pi$ is elementary amenable is not need to show that
$\partial{M}$ and $\partial{\wh{M}}$ are homeomorphic.
If we assume only that the boundaries are orientable and the Farrell-Jones Conjectures hold for 
$\pi$ then we may show that $M\times{S^1}\cong\wh{M}\times{S^1}$,
and hence that $M$ and $M'$ are $s$-cobordant $rel~\partial$.
The main obstruction to extending Theorem C to the nonorientable case is that 
the Borel Uniqueness Conjecture is not yet known for closed nonorientable aspherical 3-manifolds.
(The Geometrization Theorem of Perelman and Thurston has not yet been proven for 
nonorientable  3-manifolds.)

\begin{corollary}
\label{prime bdry}
Let $M$ be an compact aspherical orientable $4$-manifold with 
elementary amenable fundamental group.
Then $M\cong{M_1\natural{C}}$,
where $M_1\simeq{M}$,
$\partial{M}_1$ is prime with infinite fundamental group and $C$ is contractible.
\end{corollary}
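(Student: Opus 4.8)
The plan is to build an explicit model of the form $M_1\natural C$ having the same enhanced peripheral system as $M$, and then to invoke the uniqueness result, Theorem~\ref{C}. First I would apply Lemma~\ref{ZHS summand}(4) to each boundary component $N$ of $M$, writing $N=N_0\#\Sigma_N$ with $N_0$ prime (and so aspherical, $S^1\times S^2$, $S^1\,\tilde\times\,S^2$, or $S^3$) and $\Sigma_N$ a connected sum of aspherical homology $3$-spheres and copies of $S^3/I^*$. As $M$ is orientable all of these pieces are orientable. By Freedman's correspondence between compact contractible $4$-manifolds and homology $3$-spheres \cite{FQ}, each $\Sigma_N$ bounds a compact contractible $4$-manifold $C_N$.

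The structural input is that $\pi=\pi_1M$ is solvable (Theorem~\ref{A}), so the perfect group $\pi_1\Sigma_N$ --- a free product of the perfect fundamental groups of the homology-sphere summands --- admits only the trivial homomorphism to $\pi$. Hence in $\pi_1N=\pi_1N_0*\pi_1\Sigma_N\to\pi$ the factor $\pi_1\Sigma_N$ lies in $\ker(\pi_1\inc)$, the image of $\pi_1N_0$ equals that of $\pi_1N$, and the restriction $\pi_1N_0\to\pi$ has kernel differing from $\ker(\pi_1\inc)$ only by the perfect summand. I would then check that the reduced peripheral data $(N_0,\pi_1N_0\to\pi,w)$ still satisfies the corresponding clause of Theorem~\ref{A}: connected-summing with a homology sphere alters neither the image, nor the perfectness of the kernel, nor the groups $H^i(\inc;\Z\pi)$ below the top degree, since a homology $3$-sphere contributes to $H_*(\,\cdot\,;\Z\pi)$ only in degrees $0$ and $3$. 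Theorem~\ref{B} then yields a compact aspherical $4$-manifold $M_1$ with $\pi_1M_1\cong\pi$, with boundary $\coprod_N N_0$, and with peripheral maps matching those of $M$. Performing a boundary connected sum with $C_N$ on the component $N_0$ for each $N$ produces a manifold whose boundary is $\coprod_N(N_0\#\Sigma_N)=\partial M$; since each $C_N$ is contractible this manifold is homotopy equivalent to $M_1\simeq K(\pi,1)\simeq M$, which also gives $M_1\simeq M$. When $\partial M$ is connected these operations amount to a single $M_1\natural C$ with $C$ contractible, as in the statement; in general one absorbs one contractible summand per boundary component.

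Finally I would compare enhanced peripheral systems. The fundamental groups, orientation characters, and peripheral homomorphisms of $M$ and of $M_1\natural C$ agree by construction, and because each boundary component of the model is the \emph{same} oriented $3$-manifold $N_0\#\Sigma_N$ as the corresponding component of $M$, the enhancement classes $[N]\in H_3(\pi_1N)$ coincide up to the simultaneous change of sign permitted in the definition of equivalence. Thus the enhanced peripheral systems are equivalent, and Theorem~\ref{C} delivers a homeomorphism $M\cong M_1\natural C$. I expect the main obstacle to be the verification in the previous paragraph that the hypotheses of Theorem~\ref{A} descend from $(N,\inc)$ to $(N_0,\inc_0)$ in every case --- in particular the cohomological isomorphism condition of case~(2) and the perfect-kernel and index-$2$ conditions of the $PD_3$ cases --- which rests precisely on the triviality of homomorphisms from perfect groups into the solvable group $\pi$ and on the homological invisibility of homology-sphere summands away from degrees $0$ and $3$.
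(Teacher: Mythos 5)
Your strategy is sound and ends the same way as the paper's proof: build a model of the form $M_1\natural C$ whose enhanced peripheral system agrees with that of $M$, then invoke Theorem \ref{C}. But your construction of $M_1$ is genuinely different, and more laborious, than the paper's. You build $M_1$ abstractly: you check that the reduced peripheral data $(N_0,p_0,w)$ still satisfies the relevant clause of Theorem \ref{A} and then invoke the existence theorem, Theorem \ref{B}. The paper instead builds $M_1$ out of $M$ itself: writing $\partial M=N_o\cup\Sigma_o$ (punctured prime part glued to punctured homology sphere along $S^2$), it sets $M_1=M\cup_{\Sigma_o}C$, where $C$ is a compact contractible $4$-manifold with $\partial C=\Sigma$. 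Since $\pi_1\Sigma$ maps trivially to $\pi$ and $\Sigma_o$ is acyclic, the inclusion $M\hookrightarrow M_1$ is a homotopy equivalence, $\partial M_1=N_o\cup D^3=N$ is prime, and the peripheral data of $M_1\natural C$ --- including orientations and fundamental classes --- matches that of $M$ tautologically, because everything is inherited from $M$. This ``internal'' construction buys exactly the elimination of your two verification burdens: no descent of the Theorem \ref{A} hypotheses to $(N_0,p_0)$ is needed (the step you rightly flag as the main obstacle, and which does work, via triviality of maps from perfect groups to solvable groups and acyclicity of punctured homology spheres), and no enhancement classes need to be matched by hand.

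On that last point, one step of yours is asserted too quickly, namely in the case where $\pi$ is a $PD_3$-group and $\partial M$ has two components. Theorem \ref{B} gives $M_1$ with the right boundary and peripheral maps, but it says nothing about induced boundary orientations: a single orientation of $M_1$ orients \emph{both} boundary components at once, so ``each boundary component of the model is the same oriented $3$-manifold'' is not automatic --- you must exclude the possibility that the model's enhancement classes agree with those of $M$ on one component but differ by a sign on the other, which the permitted \emph{simultaneous} sign change cannot absorb. The gap is fillable: in this case $w$ is trivial, and for any such manifold exactness of $H_4(\,\cdot\,,\partial;\Z)\to H_3(\partial;\Z)\to H_3(\pi;\Z)$ forces $(p_1)_*[N_1]+(p_2)_*[N_2]=0$ in $H_3(\pi;\Z)\cong\Z$; moreover $(p_i)_*[N_i]\neq0$, since the perfect-kernel condition makes the classifying map a $\Z$-homology isomorphism as in Lemma \ref{PD3 by perfect}. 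Applying this to both $M$ and the model, and using that $H_3(\pi;\Z)$ is torsion-free, shows the two sign discrepancies must be equal, i.e.\ simultaneous. So your argument can be completed, but this extra bookkeeping (and the descent verification) is precisely what the paper's gluing construction of $M_1$ avoids.
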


\begin{proof}
We may assume that $\partial{M}=N\#\Sigma$, 
where $N$ is prime and $\Sigma$ is a homology 3-sphere, 
by Lemma \ref{ZHS summand}.
Thus $\partial{M}=N_o\cup\Sigma_o$, 
where $N_o$ and $\Sigma_o$ are the complements of open 3-discs in 
$N$ and $\Sigma$, respectively.
The homology 3-sphere $\Sigma$ bounds a contractible 4-manifold $C$.
Then $\partial{C}=\Sigma_o\cup{D^3}$.
Let $M_1=M\cup_{\Sigma_o}C$. 
The inclusion of $M$ into $M_1$ induces an isomorphism $\pi_1M\cong\pi_1M_1$,
since $\pi_1\Sigma_o=\pi_1\Sigma$ has trivial image in $\pi$.
It also induces isomorphisms $H_i(M;\Z\pi)\cong H_i(M_1;\Z\pi)$ for all $i$,
since $H_i(\Sigma_o;\Z\pi)=0$ for $i>0$.
Hence this inclusion is a homotopy equivalence $M\simeq{M_1}$.

An orientation for $M$ determines an orientation for $\partial{M}$
and hence orientations for $N$,$\Sigma$, $C$ and $M_1$.
There is a well-defined boundary connected sum $M_1\natural{C}$
compatible with these orientations.
Let $f:N_o\to\partial{M}$  and  $h:M\to{M_1}\to{M_1\natural{C}}$ be the 
obvious inclusions, and let $g:\partial{M}\to\partial{M_1\natural{C}}$ 
be the obvious homeomorphism (the identity!).
Then $h\circ\inc\circ{f}$ is the natural inclusion of $N_o$ into
$\partial{M_1\natural{C}}$.
The homomorphisms $\varphi=\pi_1g$ and $\theta=\pi_1h\circ \pi_1 \inc$ 
define an equivalence of enhanced peripheral systems.
Hence $M$ is homeomorphic to $M_1\natural{C}$, by the theorem.
\end{proof}

\section{Classification} 
Theorems B and C suggest a classification of compact aspherical 4-manifolds $M$
with elementary amenable fundamental group $\pi$.
We shall assume throughout this section that $\partial{M}$ is nonempty, 
as the absolute case is part of  \cite[Theorem 11.5]{FQ}.

If $\pi=1$ or $\Z$ then $M$ is determined by $\pi$ and $N=\partial{M}$ alone, 
but further invariants are needed in general.
A classical example where a compact aspherical manifold is not determined by its group and its boundary is given by the exteriors of the granny knot and the square knot. 
We shall give examples to show that this can also occur for 4-manifolds 
with elementary amenable fundamental group. 

\begin{lemma}
\label{epiaut}
Let $\alpha,\beta:\nu\to\pi$ be epimorphisms with the same kernel $\kappa$.
Then there is an automorphism $\theta$ of $\pi$ such that $\theta\circ\alpha=\beta$.
\end{lemma}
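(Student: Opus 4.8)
The plan is to invoke the first isomorphism theorem twice and then take a composite. Since $\alpha$ and $\beta$ are epimorphisms with the same kernel $\kappa$, both factor through the quotient homomorphism $q\colon\nu\to\nu/\kappa$. First I would record the resulting factorizations $\alpha=\ol{\alpha}\circ q$ and $\beta=\ol{\beta}\circ q$, where the induced maps $\ol{\alpha},\ol{\beta}\colon\nu/\kappa\to\pi$ are \emph{isomorphisms}: each is surjective because $\alpha$, respectively $\beta$, is, and each is injective because $\ker\alpha=\ker\beta=\kappa$ is exactly the subgroup killed by $q$.

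Having produced these two isomorphisms, I would simply set
\[
\theta=\ol{\beta}\circ\ol{\alpha}^{-1}\in\Aut(\pi),
\]
which is an automorphism of $\pi$ as a composite of isomorphisms $\pi\xrightarrow{\ol{\alpha}^{-1}}\nu/\kappa\xrightarrow{\ol{\beta}}\pi$. To finish I would verify the identity $\theta\circ\alpha=\beta$ by the direct computation
\[
\theta\circ\alpha=\ol{\beta}\circ\ol{\alpha}^{-1}\circ(\ol{\alpha}\circ q)=\ol{\beta}\circ q=\beta,
\]
using $\alpha=\ol{\alpha}\circ q$ in the first equality and $\beta=\ol{\beta}\circ q$ in the last.

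There is really no obstacle here: the statement is a formal consequence of the universal property of quotient groups, and the only thing to check is that $\ol{\alpha}$ and $\ol{\beta}$ are bijective, which is immediate from the equality of kernels together with surjectivity. The sole conceptual point worth flagging is that the common kernel hypothesis is exactly what makes $\ol{\alpha}^{-1}$ exist, so that $\theta$ is genuinely an automorphism rather than merely a homomorphism.
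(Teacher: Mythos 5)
Your proof is correct and follows exactly the paper's argument: both factor $\alpha$ and $\beta$ through $\nu/\kappa$, observe the induced maps $\ol{\alpha},\ol{\beta}$ are isomorphisms, and take $\theta=\ol{\beta}\circ\ol{\alpha}^{-1}$. Your write-up simply spells out the verification that the paper leaves implicit.
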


\begin{proof}
The epimorphisms $\alpha$ and $\beta$ induce isomorphisms
$\overline\alpha$ and $\overline\beta$ from $\nu/\kappa$ to $\pi$.
Then $\theta=\overline\beta\circ\overline\alpha^{-1}$ is an automorphism of $\pi$ such that $\theta\circ\alpha=\beta$.
\end{proof}

If $\nu = \pi_1(N,*)$, the groups $\Homeo(N,*)$ and $\Aut(\pi)$ act on the set $\Epi(\nu,\pi)$ 
of epimorphisms from $\nu$ to $\pi$ by pre- and post-composition.
If $N$ is orientable then $\Homeo(N,*)$ acts through the subgroup of $\Aut(\nu)$ which preserves the image of $\pm[N]$ in $H_3(\nu)$.
(Note also that as the 3-manifolds $N$ arising here have no lens space
summands and are determined up to homeomorphism  by $\nu=\pi_1N$
and the image of $\pm[N]$ in $H_3(\nu)$, if $N$ is orientable,
and up to homotopy equivalence by $\nu$ alone,
if $N$ is nonorientable.)

If $G$ is a group let $I(G)$ be the preimage in $G$ of the torsion subgroup of $G^{ab}$.

Our final theorem is Theorem  D,  
in a more precise formulation with includes also the
cases with $\pi$ a polycyclic $PD_3$-group and the boundary $N$ having two components.

\begin{theorem}
\label{thmD}
Let $\pi$ be a finitely presented elementary amenable group and $N$ a closed $3$-manifold.
Assume that $p:N\to{K(\pi,1)}$ is a map such that $\pi$, $N$ and $p$ 
satisfy one of the conditions of Theorem \ref{A} (with $p$ for $inc$ in case $(2)$).
Then 
\begin{enumerate}
\item{} compact aspherical $4$-manifolds $M$ with $\pi_1M\cong\pi$, $\partial{M}\cong{N}$
and $w_1M=w$ are classified by $\pi$ and $N$ alone if 
$\pi=1$ or $\Z$.
\end{enumerate}
In the other cases further invariants are needed:

\begin{enumerate}
\addtocounter{enumi}{1}
\item{}If $\pi\cong{BS(1,m)}$, $\nu=\pi_1N$, and
\begin{enumerate}
\item($m=\pm1$) then  the quotient of $\Epi(\nu,\pi)$  by the actions of $\Homeo(N,*)$ and $\Aut(\pi)$is trivial;
\item($|m|>1$) then there are at most two possibilities, corresponding to certain elements of the quotient of $\Epi(\nu,\pi)$ by the action of $\Homeo(N,*)$.  
\end{enumerate}
\item{If} $\pi$ is a polycyclic $PD_3$-group and
\begin{enumerate}
\item($N=N_1\sqcup{N_2}$, with $\nu_1=\pi_1N_1$ and $\nu_2=\pi_1N_2$)
then also an element of the quotient of 
$\Epi(\nu_1,\pi)\times{\Epi(\nu_2,\pi)}$ by the action of $\Homeo(N_1)\times{\Homeo(N_2)}$ and the diagonal action of $\Aut(\pi)$; 
\item($N$ connected, with $\nu=\pi_1N$) then also an element of the quotient of\\
 $\bigsqcup_{\{w\}}\Epi(\nu,\ker(w))$ 
by the actions of $\Homeo(N,*)$ and $\Aut(\pi)$, where the union is taken over all  epimorphisms $w:\pi\to\Z^\times$.
\end{enumerate}
\end{enumerate}
Each set of invariants satisfying the conditions of Theorem \ref{A} can be realized.
\end{theorem}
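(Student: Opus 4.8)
The plan is to deduce Theorem~\ref{thmD} from Theorem~\ref{C} (uniqueness) together with the realization half of Theorem~\ref{B} (existence), reducing the problem to a purely group-theoretic orbit count. By Theorem~\ref{C}, two compact aspherical $4$-manifolds of the kind under consideration are homeomorphic if and only if their enhanced peripheral systems are equivalent, and by Theorem~\ref{B} every enhanced peripheral system compatible with the conditions of Theorem~\ref{A} is realized. Once the isomorphism type of $\pi$, the homeomorphism type of $N=\partial M$, and the orientation character $w=w_1M$ are fixed, the only remaining datum in the peripheral system is the family of inclusion-induced homomorphisms $\pi_1\inc_i:\pi_1\partial_iM\to\pi$. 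Thus the first step is to show that an equivalence of enhanced peripheral systems between two such manifolds amounts exactly to the assertion that their inclusion homomorphisms lie in one orbit of the natural action of $\Homeo(N)\times\Aut(\pi)$. Here the decisive point is the treatment of the enhancement: since by Lemma~\ref{ZHS summand} the boundary $3$-manifolds occurring here have no lens space summands, each is determined up to homeomorphism by $\pi_1\partial_iM$ (in the nonorientable case) or by $\pi_1\partial_iM$ together with the image of $\pm[N]$ in $H_3(\pi_1\partial_iM)$ (in the orientable case), and every isomorphism preserving that image is induced by a homeomorphism. This is precisely what is recorded by replacing $\Aut(\pi_1N)$ with $\Homeo(N,*)$, so the enhancement translates into the appearance of $\Homeo(N,*)$ rather than the full automorphism group in the quotients of Theorem~\ref{thmD}.

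With this dictionary I would then treat the cases in turn, the trivial cases first. For $\pi=1$ the manifold $M$ is contractible and so determined by $N$ alone (Freedman), while for $\pi\cong\Z$ the perfect-kernel hypothesis of Theorem~\ref{A} forces $(\pi_1N)^{ab}\cong\Z$, so the epimorphism $\pi_1\inc:\pi_1N\to\Z$ has kernel $[\pi_1N,\pi_1N]$ and is unique up to $\Aut(\Z)=\{\pm1\}$ by Lemma~\ref{epiaut}; absorbing this sign into $\Aut(\pi)$ gives part~(1). In the remaining cases the homomorphisms are genuine moduli, and the task is to name the correct quotient of $\Epi$. In the two-component case~(3a) a single $\theta\in\Aut(\pi)$ must intertwine \emph{both} inclusions at once, which is exactly the diagonal action on $\Epi(\nu_1,\pi)\times\Epi(\nu_2,\pi)$; in the connected $PD_3$ case~(3b) the image of $\pi_1\inc$ is an index-two subgroup $\ker(w_\partial)$ by Theorem~\ref{cd<n}, so the natural domain is the disjoint union $\bigsqcup_{\{w\}}\Epi(\nu,\ker(w))$ over the possible index-two subgroups. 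Realizability of every listed invariant is in each case immediate from Theorem~\ref{B}.

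The step I expect to be the main obstacle is the asymmetry between parts~(2a) and~(2b), which turns on the structure of $\Aut(BS(1,m))$. When $m=\pm1$ (the torus and Klein bottle groups) the automorphism group is large, and the clean statement is the full quotient of $\Epi(\nu,\pi)$ by $\Homeo(N,*)\times\Aut(\pi)$. When $|m|>1$ the group $BS(1,m)=\Z[1/m]\rtimes\Z$ is far more rigid: the normal subgroup $\Z[1/m]$ is characteristic with unit group only $\{\pm m^k\}$, and there is no automorphism inverting the $\Z$-quotient (this would force conjugation by $m$ and by $m^{-1}$ to agree on $\Z[1/m]$, impossible for $|m|>1$). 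Consequently $\mathrm{Out}(BS(1,m))$ is small, and after quotienting $\Epi(\nu,\pi)$ by $\Homeo(N,*)$ the residual action of $\Aut(\pi)$ can identify at most a two-fold ambiguity, accounting for the ``at most two possibilities'' in~(2b). The delicate part is to make this precise and to reconcile it with the orientation-character bookkeeping already isolated in the remark following Lemma~\ref{ZHS summand}: one must verify which pairs consisting of an inclusion homomorphism and a compatible $w$ are genuinely carried to one another by an automorphism of $BS(1,m)$, rather than being merely abstractly isomorphic, and check that exactly the two surviving classes are realized.
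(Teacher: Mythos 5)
Your overall framework matches the paper's: reduce the classification to counting orbits of inclusion-induced epimorphisms under $\Homeo(N,*)$ and $\Aut(\pi)$, dispose of $\pi=1$ and $\pi\cong\Z$ by showing the kernel of the epimorphism is forced (it must be $[\nu,\nu]$, so Lemma~\ref{epiaut} applies), handle (3a) and (3b) by the same orbit bookkeeping, and get realization from Theorem~\ref{B}. One caveat on the reduction itself: Theorem~\ref{C} is stated only for orientable boundaries, so the paper does not route the uniqueness through it; instead it invokes Corollary~\ref{FJC for EA in dim 4} (Borel uniqueness) together with elementary obstruction theory, which is available here because $N$ is part of the given data. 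This is a cosmetic difference, not a gap.

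The genuine gap is case (2b), which you correctly identify as the main obstacle but propose to close by the wrong mechanism. The assertion ``at most two possibilities'' is an upper bound on the number of orbits, hence on the number of homeomorphism classes; the rigidity of $\Aut(BS(1,m))$ that you invoke (characteristic $\Z[1/m]$, no automorphism inverting the $\Z$-quotient) cuts the other way: a smaller acting group can only produce \emph{more} orbits, never fewer, so it cannot yield the bound. What is needed, and what the paper proves, is rigidity of the epimorphisms themselves. Using Poincar\'e duality for $(M,N)$ one shows $\beta_1(N)=1$, so $\nu/I(\nu)\cong\pi/I(\pi)\cong\Z$ up to a sign; then a computation with coefficients $\Lambda=\Z[\pi/I(\pi)]$ (namely $H_2(M,N;\Lambda)\cong\Lambda/(mt-w(t))\Lambda$ and $I(\nu)^{ab}$ torsion-free of rank $2$) shows that every epimorphism $\nu\to\pi$ factors through $\nu/[I(\nu),I(\nu)]$ and its kernel is one of exactly two subgroups $E,E'\leq\nu$, determined by the two possible identifications of $\nu/I(\nu)$ with $\Z$. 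Lemma~\ref{epiaut} then shows that epimorphisms with the same kernel differ by an automorphism of $\pi$, giving at most two classes, possibly further identified by $\Homeo(N,*)$. Your observation that no automorphism of $BS(1,m)$ inverts the $\Z$-quotient only explains why the two classes need not coalesce into one; the upper bound of two rests on the kernel analysis, which is absent from your proposal.
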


\begin{proof}
In cases (1) and (2), Lemma \ref{components of boundary} and Theorem \ref{cd<n}, show that $N$ is connected and $
\pi_1\inc$ is an epimorphism.   It then follows from Corollary \ref{FJC for EA in dim 4} that $M$ is determined by $\pi$, $N$ and the image of $\pi_1\inc$ in the quotient
of $\Epi(\pi_1N,\pi)$  by the actions of $\Homeo(N,*)$ and $\Aut(\pi)$.

If $\pi = 1$, then $\Epi(\pi_1N,\pi)$ is trivial, and if $\pi = \Z$, then Theorem \ref{A} shows that the quotient of $\Epi(\pi_1N,\pi)$ by $\Aut(\pi)$ is trivial.


We next need to explain the more explicit result in case (2b).
Suppose that $\pi=\pi_1M\cong{BS(1,m)}=\langle{a,t}\mid{tat^{-1}=a^m}\rangle$,  where $|m|>1$.
Then the generator $t$ determines an isomorphism $\pi/I(\pi)\cong\Z$,
and hence an isomorphism $\Z[\pi/I(\pi)]\cong\Lambda=\Z[t,t^{-1}]$.

Since $[\pi,\pi]$ is torsion-free and abelian, 
every homomorphism from $\nu$ to $\pi$ must factor through $\nu/I([\nu,\nu])$. 
The exact sequence of $(M,N)$ and Poincar\'e duality together imply that $\beta_1(N)=1$,
since $H_2(M,N)\cong{H^2(M;\Z^w)}$ is finite.
Hence $\nu/I(\nu)\cong\pi/I(\pi)$.
We assume now that we have fixed such an isomorphism.
The subgroup $A=I(\pi)\cong\Lambda/(t-m)\Lambda$ is torsion-free and of rank 1 
as an abelian group.
The exact sequence of $(M,N)$ with coefficients in $\Lambda=\Z[\pi/I(\pi)]$ 
gives a short exact sequence
\[
0\to{H_2(M,N;\Lambda)}\to{I(\nu)^{ab}}\to{A=I(\pi)}\to0,
\]
since $H_2(M;\Lambda)=H_2(A;\mathbb{Z})=A\wedge{A}=0$.
Now 
\[
H_2(M,N;\Lambda)\cong{H^2(M;{}^w\Lambda)}\cong
{\Ext^1_\Lambda(A,{}^w\Lambda)}\cong\Lambda/(mt-w(t))\Lambda,
\]
by Poincar\'e duality and the Universal Coefficient spectral sequence,
and so $I(\nu)^{ab}$ is torsion-free and of rank 2 as an abelian group.
Let $E$ be the preimage of $H_2(M,N;\Lambda)$ in $\nu$.
Then $E$ is characteristic in $\nu$, since
its image in $I(\nu)^{ab}$ is the kernel of multiplication by $mt-w(t)$.

If we use coefficients $\Z[\pi^{ab}]$ instead of $\Lambda$,
a similar argument shows that $[\nu,\nu]^{ab}$ has rank 2.
Hence the quotient of $[I(\nu),I(\nu)]$ by $I(\nu,\nu)$ is a torsion group.
Since $\pi$ is torsion-free and metabelian, the image of this quotient in $\pi$ is trivial.
Hence every epimorphism from $\nu$ to $\pi$ factors through $\nu/[I(\nu),I(\nu)]$.
In all cases, such epimorphisms have kernel $E$ and so induce isomorphisms 
$\nu/E\cong\pi$.
If $(M',N)$ is another such pair and the epimorphisms $p=\pi_1\inc$ 
and $p'=\pi_1\inc'$
induce the same isomorphism from $\nu/I(\nu)$ to $\pi/I(\pi)=\Z$ 
then $\ker(p)=\ker(p')$.
Hence we may again invoke Lemma \ref{epiaut}.

The effect of changing our choice of identification of $\nu/I(\nu)$ with $\pi/I(\pi)$
is to invert the action of $t$ on $I(\nu)^{ab}$, and thus to replace
$E$ by the preimage in $\nu$ of the kernel of multiplication by $t-w(t)m$.
Thus for such $\pi$ and $N$ there are two possible kernels.
(These may be equivalent under composition with automorphisms of $\nu$ 
induced by self-homeomorphisms of $N$.)


In case 3(a) the group $\pi$ is a canonical quotient of each of $\nu_1$
and $\nu_2$, since $\pi$ is solvable and the kernels are perfect.
Hence the actions of $\Homeo(N_1,*)$ and $\Homeo(N_2,*)$ induce homomorphisms 
from these groups to $\Aut(\pi)$. 
If one of these homomorphisms is onto then $\Homeo(N_1,*)\times{\Homeo(N_2,*)}$ and   $\Aut(\pi)$ together act transitively on $\Epi(\nu_1,\pi)\times{\Epi(\nu_2,\pi)}$,
and the classification needs only $\pi$ and $N$.  
\end{proof}

When $\pi\cong{BS(1,2)}$ we recover some of the results of \cite{CP}, 
who show that if a knot $K$ bounds a homotopy ribbon 2-disc $D_K\subset{D^4}$ 
with exterior $X(D_K)$ such that $\pi_1(X(D_K))\cong{BS(1,2)}$ then 
$X(D_K)$ is aspherical,  and $K$ has at most 2 topologically distinct such discs.
They give examples of knots for which the disc is essentially unique,
and of knots with 2 such discs.

We now give examples of nonhomeomorphic compact aspherical 4-manifolds with the same boundary and fundamental group.  The examples 
 are  based on the same 3-manifold,
and rely on the canonical nature of the geometric decomposition of a 3-manifold.

\begin{example} \label{4d_he_with_homeo_b}
If $L\subset{S^3}$ is a knot or link then $X(L)$ is the complement 
of the interior of a regular neighbourhood of $L$.
If we fix orientations for $S^1$ and $S^3$ then 
the longitudes and meridians of a link 
determine `coordinates" for $\partial{X}(L)$ and
a basis for $H_1(\partial{X}(L);\mathbb{Z})$.
A knot or link $L$ is {\it hyperbolic\/} if the interior of $X(L)$ is a 
complete hyperbolic 3-manifold of finite volume.
The exterior of a hyperbolic knot determines the knot.

The 3-component Borromean rings link $Bo$ is hyperbolic 
\cite[pages 131-132]{Th}.
There are infinitely many hyperbolic knots $K$ with trivial Alexander 
polynomial \cite{Ka04}.
Let $K_1,K_2,K_3$ be three distinct such knots, 
with exteriors $X_1,X_2$ and $X_3$.
Let
\[N=X(Bo)\cup_{i=1}^3X_i,
\]
where the longitudes and meridians of $K_i$ are identified with
those of the $i$th component of $Bo$,  for $i=1,2,3$.
The 3-manifold $N$ clearly has a geometric decomposition
into four hyperbolic pieces.
Any self-homeomorphism $h$ of $N$ is isotopic to one which
preserves this decomposition and so fixes these pieces setwise, 
since no two are homeomorphic.

Let $\nu=\pi_1N$. 
The images of the meridians give a preferred basis (up to signs)
for $\nu^{ab}=\Z^3$.
Self-homeomorphisms of $N$ preserve this basis (up to sign), 
since they must fix the pieces of the geometric decomposition.

In the first example $\pi\cong\Z^2$.
Let $\xi_i:\nu\to\Z^2$ be the epimorphism with kernel generated by 
$[\nu,\nu]$ and the meridians for $K_i$.
Then the three epimorphisms $\xi_i$ are pairwise inequivalent under the action of 
automorphisms of $\pi_1N$.
There is a compact aspherical 4-manifold $M_i$ with $\pi_1M_i\cong\Z$,
$\partial{M_i}\cong{N}$ and $\pi_1\inc=\xi_i$,
by Theorem \ref{B}. 
Thus we have three distinct 4-manifolds $M_i$ such that $\pi_1M_i\cong\Z^2$
and  $\partial{M_i}\cong{N}$.

In the other two examples $\pi\cong\Z^3$.
Let $Y=N\sqcup{T^3}$, let $f:N\to{T^3}$ be a map which induces the abelianization 
$ab:\pi_1(N)\to\mathbb{Z}^3$ and let $X$ be the 
mapping cylinder of the map $F:Y\to{T^3}\times[0,1]$ given by 
$F(n)=(f(n),0)$ for all $n\in{N}$ and $F(z)=(z,1)$ for all $z\in{T^3}$.
Then $(X,Y)$ is a $PD_4$-pair,
and so there is a compact 4-manifold $W\simeq{T^3}\times[0,1]$ with boundary $\partial{W}=N\sqcup{T^3}$, 
by Theorem \ref{PD4pair}.
Let $\partial_0W=N$ and $\partial_1W=T^3$.
(The preferred basis of $H_1(W)\cong{H_1(N)}$
determines an identification of $\partial_1W$ with $T^3$.)

The second example has connected boundary.

Let $U$ be the mapping cylinder of the orientation cover of the Klein bottle,
and let $M=U\times{S^1}$. 
Then $M$ is orientable,
$\partial{M}\cong{T^3}$ and $\pi=\pi_1M\cong\pi_1(Kb)\times\mathbb{Z}$.
Let $g:\partial_1W=T^3\to\partial{M}$ be a homeomorphism, 
and let $M(g)=W\cup_gM$.
If $h$ is another such homeomorphism then $M(g)\cong{M(h)}$ 
if and only the image of $h^{-1}g$ in $GL(3,\mathbb{Z})$ is in 
the subgroup generated by the image of $\Aut(\pi)$ 
(which contains the diagonal matrices).
This is a proper subgroup of $GL(3,\mathbb{Z})$.
Thus there are compact aspherical orientable 4-manifolds $M(g)$ and $M(h)$ 
such that $(M(g),N)$ and $(M(h),N)$ are not homotopy equivalent as pairs,
although $M(g)\simeq{M(h)}\simeq{Kb}\times{S^1}$ and 
$\partial{M(g)}\cong\partial{M(h)}\cong{N}$.

The third example has two boundary components.

Let $M(g)=W\cup_gW$ be the union of two copies of $W$ along 
$\partial_1W=T^3$, 
via a linear homeomorphism $g\in{GL(3,\mathbb{Z})}$.
There is an obvious homeomorphism $M(g^{-1})\cong{M(g)}$
which swaps the copies of $W$.
With this in mind, 
we see that $M(h)\cong{M(g)}$ if and only if $h=g^{\pm1}\delta$,
for some  $\delta$ in the diagonal subgroup of $GL(3,\mathbb{Z})$.
It follows easily that there are compact aspherical orientable 4-manifolds 
$M(g)$ and $M(h)$ such that $(M(g),\partial{M(g)})$ and 
$(M(h),\partial{M(h)})$ are not homotopy equivalent as pairs,
although $M(g)\simeq{M(h)}\simeq{T^3}$ and 
$\partial{M(g)}\cong\partial{M(h)}\cong2N$.
\end{example}

\medskip
A given 3-manifold may bound more than one compact aspherical 4-manifold.
For instance,  the 3-torus $T^3$ bounds each of $T^2\times{D^2}$ and the mapping
cylinder of the orientable double cover of $Kb\times{S^1}$.
More generally, let $N_q$ be the total space of the $S^1$-bundle over $T$
with Euler number $q$, and let $\Gamma_q=\pi_1N$.
Then $N$ bounds the corresponding disc bundle space, with $\pi\cong\mathbb{Z}^2$,
and also bounds the mapping cylinder of a double cover of $N_{2q}$,
with $\pi\cong\Gamma_{2q}$.

\section{other good groups}

In this final section we consider two possible extensions of our work.
Firstly, we might relax the hypothesis that $\pi$ be elementary amenable.
Secondly, we might ask what is known about the smooth classification.
We shall comment on each of these.

It is easy to give examples of aspherical 4-manifolds whose fundamental groups
have nonabelian free groups.
The simplest are the products of two hyperbolic surfaces.
However,  as mentioned in the introduction, 
we do not know of any examples of a compact aspherical 4-manifold $M$ 
such that $\pi_1(M)$ satisfies the DEC but is not elementary amenable.

It is plausible that the DEC might hold for all amenable groups, 
since the strategy of Freedman and Teichner in \cite{FT95} 
and the definition of amenable group each involve notions of controlled growth.
(A more speculative hope is that the DEC should hold for the broader class
of groups which have no noncyclic free subgroups.)
S. Fisher has asked whether a group $G$ with a finite 2-dimensional $K(G,1)$ complex
and such that $\beta_2^{(2)}(G)=0$ must be coherent  \cite{Fi24}.
The $L^2$-Betti numbers of a finitely generated infinite amenable group are all 0
\cite{CG86}.
Thus a positive answer to Fisher's question would suggest that amenable groups 
of type $F$ and cohomological dimension 2 should be coherent.
All such groups are Baumslag-Solitar groups $BS(1,m)$,
by \cite[Corollary 2.6.1]{HiF}, 
and so the extension to amenable groups would give no new examples with $\cd\pi=2$.

The picture is less clear for groups of cohomological dimension 3,
although there is strong evidence that amenable $PD_3$-groups are solvable.
In particular, all such 3-manifold groups are polycyclic.
There are no known examples of groups of type $F$ which have 
no noncyclic free subgroups but are not virtually solvable.

If we work on the level of $PD_n$-pairs and homotopy type then 
we need no constraints on the fundamental group $\pi$.
In \cite{dh3} we shall use the Unique Factorization Theorem for $PD_3$-complexes,
the Kurosh Theorem on subgroups of free products and a result on the end modules
$H^1(\pi;\Z\pi)$ to show that if $(X,Y)$ is a $PD_4$-pair with $X$ aspherical and
either $\cd\pi\leq2$ or $\pi$ is a duality group of dimension 3 
then $(X,Y)$ is homotopy equivalent to a boundary connected sum of such pairs 
with indecomposable boundaries.

When $M$ is a closed aspherical 4-manifold and $\pi$ is elementary amenable then
$\chi(M)=0$ and $\pi$ is polycyclic, and $M$ is smoothable.
We have no general results on the existence of smooth structures 
on  compact aspherical 4-manifolds with boundary.   
They don't always exist.
Every integral homology 3-sphere bounds a compact contractible 4-manifold,
which is determined up to homeomorphism by its boundary.
However Freedman's contractible 4-manifold with boundary the Poincar\'e homology
sphere is not smoothable, by Rokhlin's Theorem.
In general,
the existence of a smooth structure  on such contractible 4-manifolds
is a notoriously difficult question.

The Borel Uniqueness Conjecture is unknown for any closed aspherical 4-manifold with nonelementary amenable fundamental group.   Recently a counterexample to the smooth analogue of the Borel Uniqueness Conjecture in dimension 4 has been given \cite{DHHRS}, although there is no such example known with elementary amenable fundamental group.

\medskip
\noindent{\bf Acknowledgment.}
This collaboration began at the conference on
 {\it Topology of Manifolds : Interactions between high and low dimensions\/} 
 held at Creswick, VIC.
The authors would like to thank the 
 MATRIX Institute for its support.  
 JFD would like to thank the National Science Foundation for its support under grant DMS 1615056 and the Simons Collaboration Grant 713226.   We would like to thank Mark Powell for explaining the paper \cite{CP} to us.
 
 We would like to thank  Rostislav Grigorchuk for useful correspondence, in which he was asked if  there was a finitely presented group with intermediate growth with finite cohomological dimension  and replied ``I am sure that such an example is unknown."  He did write, however, that it might be possible to construct a finitely presented torsion-free group with intermediate growth.
 
 \bibliography{dh}{}
\bibliographystyle{alpha}

\end{document}